\documentclass[11pt]{article}

\usepackage[utf8]{inputenc}
\usepackage[T1]{fontenc}
\usepackage{lmodern}
\usepackage{amsmath,amssymb,amsthm,mathtools}
\usepackage{booktabs}
\usepackage{enumitem}
\usepackage[numbers,sort&compress]{natbib}
\usepackage{hyperref}
\usepackage[nameinlink,noabbrev]{cleveref}

\hypersetup{
  colorlinks=true,
  linkcolor=blue,
  citecolor=blue,
  urlcolor=blue
}

\newcommand{\Pp}{\mathbb{P}}
\newcommand{\Ee}{\mathbb{E}}
\newcommand{\cF}{\mathcal{F}}
\newcommand{\one}{\mathbf{1}}

\theoremstyle{plain}
\newtheorem{theorem}{Theorem}[section]
\newtheorem{corollary}[theorem]{Corollary}
\newtheorem{proposition}[theorem]{Proposition}

\theoremstyle{definition}
\newtheorem{definition}[theorem]{Definition}
\newtheorem{assumption}[theorem]{Assumption}
\newtheorem{example}[theorem]{Example}
\newtheorem{principle}[theorem]{Principle}

\theoremstyle{remark}
\newtheorem{remark}[theorem]{Remark}

\title{The Exact Ville Identity\\
\large From the Absorbing Case to the General Law\\
with an Application to E-Values}

\author{
Victor H. de la Pe\~na\thanks{Department of Statistics, Columbia University.}
\and
Michael J. Klass\thanks{Department of Statistics and Department of Mathematics, University of California, Berkeley.}
}

\date{Draft, 2026}

\begin{document}

\maketitle

\nocite{klass2026beyond,ville1939etude,doob1953stochastic,wald1944cumulative,ramdas2023game,howard2020time,howard2021confidence,grunwald2024safe,shafer2021testing,vovk2021evalues,koolen2026generalisation,wangramdas2024extended,ramdas2020admissible,larsson2025numeraire,ramdasmanole2026randomized,waudbysmith2024estimating,agrawal2025stopping,blierwongwang2026thresholds,kuang2026score,koning2026anytime,fischer2024improving,lorden1970excess,woodroofe1982nonlinear,siegmund1985sequential,nikeghbali2008generalization}

\begin{abstract}
For a nonnegative supermartingale $(M_n)$ with $M_0=1$, how likely is it ever to reach a level $b>1$? We answer this exactly. Writing $T_b$ for the first crossing time, the answer is the identity
\[
  \Pp(T_b<\infty)=\frac{1-D_b-R_b}{b+O_b},
\]
in which three nonnegative quantities account for the entire gap to one: the expected overshoot $O_b$ at crossing, the cumulative supermartingale loss $D_b$ before crossing, and a survival residual $R_b$ that records mass which never crosses. We develop the identity in two passes. The first pass assumes that paths which never cross decay to zero; under this transparent hypothesis a short telescoping argument gives $\Pp(T_b<\infty)=(1-D_b)/(b+O_b)$. The second pass removes the hypothesis and recovers the term it had silently discarded, namely $R_b$, with the first identity emerging as the special case $R_b=0$. The development rests on a conservation law for stopped martingales, valid for extended-valued stopping times, established in the companion paper. The classical bound $\Pp(T_b<\infty)\le 1/b$ -- Ville's inequality -- is recovered as a one-line corollary rather than assumed, and we give an exact formula for its slack, a sharp tightness criterion, a structural theorem that computes the overshoot for whole families of processes at once, worked examples (including one with $R_b>0$ that the first pass cannot reach), and a tail-rate corollary.

Two structural results from the companion paper are carried over and exploited here. First, an if-and-only-if characterization of the integrability condition the identity needs, which holds not only for nonnegative supermartingales but for any process whose absolute value is a submartingale; this widens the identity's natural domain. Second, a pair of conditions -- sufficient wobbling and bounded overshoot -- that organizes the three correction terms by cause: wobbling governs the residual $R_b$, bounded overshoot governs $O_b$, and the martingale-versus-supermartingale gap governs $D_b$. As an application we show that, read through $b=1/\alpha$, the identity returns the exact type-I error of any sequential e-value test, names the three sources of its conservatism, and prescribes a recalibrated rejection threshold that recovers wasted significance -- together with the confidence-sequence dual and an honest account of where the recalibration is and is not safe.
\end{abstract}

\tableofcontents

\section{Introduction}

Let $(M_n)_{n\ge0}$ be a nonnegative supermartingale with $M_0=1$, and fix a level $b>1$. The basic question of sequential analysis is: what is the probability that the process ever reaches $b$? Writing
\[
  T_b=\inf\{n\ge1:M_n\ge b\}
\]
for the first time it does so (with $\inf\varnothing=\infty$), we want $\Pp(T_b<\infty)$. A classical and very useful answer is the bound $\Pp(T_b<\infty)\le 1/b$, known as Ville's inequality \cite{ville1939etude}, which underpins confidence sequences, e-processes, and the modern theory of anytime-valid inference \cite{ramdas2023game,howard2020time,howard2021confidence,grunwald2024safe,shafer2021testing}.
But it is only a bound, and it is silent about how much room it leaves.

This paper replaces the bound by an exact formula. Because the result is as much a way of seeing Ville's inequality as a theorem, the exposition is deliberately staged: we build the formula in two passes, each with its own proof, so that the reader first sees what the answer is in a friendly setting and then sees why the general answer must carry an extra term.

\paragraph{First pass (Section~\ref{sec:absorbing}): the absorbing case.}
We begin under a hypothesis that holds in the great majority of textbook examples: paths that never cross drift down to zero. We call such processes absorbing. Under this hypothesis a short telescoping argument yields the clean identity
\begin{equation}
  \Pp(T_b<\infty)=\frac{1-D_b}{b+O_b},
  \label{eq:absorbing-intro}
\end{equation}
where $O_b\ge0$ is the expected overshoot at crossing and $D_b\ge0$ is the cumulative supermartingale loss accumulated before crossing. From \eqref{eq:absorbing-intro} we read off Ville's inequality immediately, and we already learn the lesson that its slack has two ingredients: an overshoot that inflates the denominator and a loss that deflates the numerator.

\paragraph{Second pass (Section~\ref{sec:general}): the general law.}
We then ask what happens when the absorbing hypothesis fails -- when some paths survive forever without crossing and without decaying to zero. The clean derivation breaks at exactly one point, the passage to the limit, and inspecting that point reveals a third quantity that the absorbing case had set to zero by fiat: the survival residual
\[
  R_b=\lim_{n\to\infty}\Ee[M_n\one(T_b>n)]\ge0.
\]
The honest, fully general identity is
\begin{equation}
  \Pp(T_b<\infty)=\frac{1-D_b-R_b}{b+O_b},
  \label{eq:general-intro}
\end{equation}
and the absorbing identity \eqref{eq:absorbing-intro} is recovered precisely as the special case $R_b=0$ (\Cref{cor:absorbing-special}). The existence of the limit defining $R_b$, and its meaning as an exactly balancing shortfall, are supplied by the conservation law of the companion paper \cite[Thm.~3.2]{klass2026beyond}; the conditions under which $R_b$ vanishes are governed by the finiteness theorem of the same paper \cite[Thm.~4.1]{klass2026beyond}.

\paragraph{What the rest of the paper does.}
\Cref{sec:setting} fixes notation. \Cref{sec:conservation} recalls the conservation law on which the second pass rests, and adds the if-and-only-if integrability characterization that tells us exactly when the identity's hypothesis holds -- a characterization valid for any process whose absolute value is a submartingale, hence beyond nonnegative supermartingales. \Cref{sec:absorbing,sec:general} carry out the two passes. \Cref{sec:dichotomy} introduces the organizing dichotomy -- wobbling for $R_b$, bounded overshoot for $O_b$, the supermartingale gap for $D_b$ -- and records the domain extension to $|M_n|$-submartingales. \Cref{sec:ville} re-derives Ville's inequality from the general identity and gives an exact formula for its slack, together with a sharp tightness criterion; it also records the continuous-time case, where the identity collapses to a single term and Ville's bound becomes an equality. \Cref{sec:overshoot} proves a structural theorem that computes the overshoot for entire families of processes at once. \Cref{sec:principles} lists computational principles, and \Cref{sec:examples} works examples, among them one with $R_b>0$ on which the absorbing identity returns the wrong answer. \Cref{sec:evalues} develops the application to e-values: setting $b=1/\alpha$ turns \eqref{eq:general-intro} into the exact type-I error of a sequential test, names its three sources of conservatism, and shows how to recover the significance the test would otherwise waste -- with the confidence-sequence dual and a quantified account of the composite-null caveat. \Cref{sec:tail-rate} gives a tail-rate corollary, \Cref{sec:summary-table} summarizes the examples in a table, and \Cref{sec:conclusion} concludes.

\paragraph{On novelty.}
The qualitative observation that Ville's bound loses something to overshoot and to supermartingale defect is visible already in the textbook optional-stopping proof, and a schematic three-way split appears in lecture notes on the subject. What is new here is the exact, non-asymptotic, and computable identity \eqref{eq:general-intro} for general nonnegative supermartingales -- all three terms are genuine, evaluable quantities, not placeholders -- its grounding in a conservation law that survives the event $\{T_b=\infty\}$, and the operational consequences for anytime-valid inference. The continuous-time tightness and the random-walk renewal asymptotics appear as the two corners that the identity interpolates.

\section{Setting and Definitions}
\label{sec:setting}

Let $(\Omega,\cF,\Pp)$ carry a filtration $\{\cF_n\}_{n\ge0}$. We work throughout with $M_0=1$ and a fixed level $b>1$.

\begin{definition}[Supermartingale and decrement]
A nonnegative adapted process $(M_n,\cF_n)_{n\ge0}$ with $\Ee[M_n]<\infty$ is a supermartingale if $\Ee[M_n\mid \cF_{n-1}]\le M_{n-1}$ almost surely for every $n\ge1$, and a martingale if equality always holds. Its decrement at time $n$ is the nonnegative, $\cF_{n-1}$-measurable random variable
\[
  \delta_n=M_{n-1}-\Ee[M_n\mid \cF_{n-1}]\ge0.
\]
\end{definition}

The decrement measures how much expected mass the process sheds at step $n$. A martingale sheds none, so $\delta_n\equiv0$; a strict supermartingale sheds a positive amount, and that shed mass is exactly what makes Ville's bound loose. Equivalently, the cumulative decrements $\sum_{i\le n}\delta_i$ form the predictable nondecreasing part of the Doob decomposition of $(M_n)$ \cite{doob1953stochastic}. Two of our three correction terms are built from $\delta_n$ and from how far the process lands past $b$.

\begin{definition}[The loss and the overshoot]
For the first crossing time $T_b=\inf\{n\ge1:M_n\ge b\}$ (with $\inf\varnothing=\infty$), define
\begin{align}
\text{supermartingale loss:}\qquad
  D_b&=\sum_{i=1}^{\infty}\Ee[\delta_i\one(T_b\ge i)], \label{eq:loss}\\
\text{expected overshoot:}\qquad
  O_b&=\Ee[M_{T_b}-b\mid T_b<\infty]. \label{eq:overshoot}
\end{align}
\end{definition}

Two features of these definitions deserve a word, because the whole argument turns on them. First, the indicator $\one(T_b\ge i)$ in $D_b$: a decrement counts only if it occurs while the process is still running, that is, before it has stopped at the crossing. So $D_b$ is the loss accumulated along the way to crossing, not the total lifetime loss of the process. This is the right object because, once the process has crossed and we have stopped, any further shedding is irrelevant to whether $b$ was reached. The event $\{T_b\ge i\}=\{T_b\le i-1\}^c$ is $\cF_{i-1}$-measurable, and $\delta_i$ is also $\cF_{i-1}$-measurable; this shared measurability is what lets the two combine cleanly in the stopped telescoping of \Cref{sec:absorbing}.

Second, the overshoot $O_b$ is the conditional amount by which the process exceeds $b$ at the moment it crosses. On a lattice whose values include $b$ it is zero; for a process that can jump over $b$ it is positive. We assume throughout, wherever $O_b$ appears, the mild integrability condition
\begin{equation}
  \Ee[M_{T_b}\one(T_b<\infty)]<\infty
  \quad\Longleftrightarrow\quad
  O_b<\infty,
  \label{eq:int-condition}
\end{equation}
the equivalence holding because $\Ee[M_{T_b}\one(T_b<\infty)]=(b+O_b)\Pp(T_b<\infty)$ once $O_b$ is finite. This is the only integrability hypothesis the general identity requires; it is automatic whenever the overshoot is bounded, as in all of our examples. \Cref{sec:conservation} records exactly when \eqref{eq:int-condition} holds, in the form of a necessary-and-sufficient condition.

The third term, the survival residual, is deferred to \Cref{sec:general}, since the whole point of the first pass is to exhibit the argument that makes it invisible.

\section{The Conservation Law We Build On, and When the Identity's Hypothesis Holds}
\label{sec:conservation}

Our development follows the companion paper \cite{klass2026beyond}, whose central results we restate here as the engine of the second pass. The first concerns an arbitrary mean-zero martingale and an arbitrary, possibly infinite, stopping time. The second tells us precisely when the integrability hypothesis \eqref{eq:int-condition} is satisfied.

\begin{theorem}[Conservation law for stopped martingales \cite{klass2026beyond}]
\label{thm:conservation}
Let $\{X_n\}$ be a mean-zero martingale and $T$ an extended-valued stopping time for its filtration, with $\Ee|X_T|\one(T<\infty)<\infty$. Then there is a finite real number $L$ with
\[
  \Ee[X_T\one(T<\infty)]=L
  \qquad\text{and}\qquad
  \lim_{n\to\infty}\Ee[X_n\one(T>n)]=-L.
\]
Neither $L$ nor $\Pp(T=\infty)$ need be zero.
\end{theorem}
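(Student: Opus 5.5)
Set $L:=\Ee[X_T\one(T<\infty)]$, which is a finite real number by the hypothesis $\Ee|X_T|\one(T<\infty)<\infty$; the first equation then holds by definition. The real content is the limit statement. I would derive it from optional stopping at the bounded time $T\wedge n$ and let $n\to\infty$.

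For each fixed $n$, $T\wedge n$ is a bounded stopping time and $X_0$ has mean zero, so $\Ee[X_{T\wedge n}]=\Ee[X_0]=0$. Splitting on $\{T\le n\}$ and $\{T>n\}$ gives
\[
  \Ee[X_T\one(T\le n)]+\Ee[X_n\one(T>n)]=0,
\]
with both terms finite: the first because $|X_T|\one(T\le n)\le\sum_{k\le n}|X_k|$, the second because $X_n$ is integrable. Hence
\[
  \Ee[X_n\one(T>n)]=-\Ee[X_T\one(T\le n)]\qquad\text{for every } n.
\]

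Now let $n\to\infty$. On $\{T<\infty\}$ we have $X_T\one(T\le n)\to X_T\one(T<\infty)$ pointwise, and the integrable variable $|X_T|\one(T<\infty)$ dominates the whole sequence. By dominated convergence, $\Ee[X_T\one(T\le n)]\to L$. Therefore $\lim_n\Ee[X_n\one(T>n)]$ exists and equals $-L$. The only delicate point is that this limit is not assumed in advance; it is forced by the exact identity above. The integrability hypothesis is used precisely at the dominated-convergence step, and that is the step I expect to need care. Without it, $\Ee[X_T\one(T\le n)]$ could oscillate or diverge, and the limit could fail to exist.

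For the final sentence I would give an example. Take the simple symmetric random walk $X_n$ started at $0$ and $T=\inf\{n:X_n=1\}$. Then $T<\infty$ a.s., $L=1\neq0$, and $\Ee[X_n\one(T>n)]=-\Pp(T\le n)\to-1$. To get $\Pp(T=\infty)>0$ as well, take $X_n=M_n-1$ with $M$ a nonnegative product martingale that hits level $2$ with probability $p\in(0,1)$, for example a random walk on $\{0,1,2\}$ absorbed at both ends. Alternatively, take a martingale with independent mean-zero increments $\pm\epsilon_k$, $\sum\epsilon_k<\infty$, stopped upon exceeding a small level. Either way $L$ and $\Pp(T=\infty)$ are both nonzero, and the general law of the theorem is needed rather than the classical $L=0$ case.
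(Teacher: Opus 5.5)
Your proof is correct. Optional stopping at the bounded time $T\wedge n$ gives the exact split $\Ee[X_T\one(T\le n)]+\Ee[X_n\one(T>n)]=0$, and dominated convergence on the crossing part (dominated by $|X_T|\one(T<\infty)$) forces the limit $-L$; your examples also correctly witness the final sentence (the walk hitting $1$ gives $L=1$, and the walk on $\{0,1,2\}$ gives $L=\Pp(T=\infty)=1/2$). The paper only cites this result from the companion, but your split-and-pass-to-the-limit argument is the same mechanism as its proof of \Cref{thm:general} (see also \Cref{rem:conservation-disguise}), where nonnegativity lets monotone convergence play the role of your dominated convergence.
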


In words: the expected value booked on the paths that stop and the limiting expected value carried by the paths still in motion are exact negatives of one another. The optional sampling theorem \cite{doob1953stochastic} and Wald's equation \cite{wald1944cumulative} are the special case in which the second limit vanishes ($L=0$); the identity holds in general, whether or not $T$ is finite almost surely. This is precisely the structure we will need to make sense of the limit hidden inside the crossing probability.

The conservation law presupposes that $\Ee|X_T|\one(T<\infty)<\infty$. Whether that holds is not an afterthought: it is the precise hypothesis \eqref{eq:int-condition} on which the general identity rests. The companion paper characterizes it exactly, and at a level of generality worth importing in full, because it reaches beyond nonnegative supermartingales.

\begin{theorem}[Finiteness characterization \cite{klass2026beyond}]
\label{thm:finiteness}
Let $(Z_n)$ be any sequence of random variables such that $|Z_n|$ is a submartingale with respect to a suitable filtration, and let $T$ be any associated extended-valued stopping time. Then
\[
  \Ee|Z_T|\one(T<\infty)<\infty
  \quad\Longleftrightarrow\quad
  \lim_{n\to\infty}\Ee|Z_T|\one(n\le T<\infty)=0.
\]
\end{theorem}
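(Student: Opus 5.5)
The plan is to prove the two directions separately. The forward direction is measure theory alone; the reverse direction uses the submartingale hypothesis on $|Z_n|$ only to guarantee integrability at each fixed time.

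\emph{Forward direction.} Suppose $\Ee|Z_T|\one(T<\infty)<\infty$. Each random variable $|Z_T|\one(n\le T<\infty)$ is dominated by the integrable variable $|Z_T|\one(T<\infty)$. As $n\to\infty$ these variables decrease pointwise to $0$, because on $\{T<\infty\}$ the indicator $\one(n\le T)$ vanishes once $n>T$. Dominated (or monotone) convergence then gives $\Ee|Z_T|\one(n\le T<\infty)\to0$. Equivalently, this is the tail of the convergent series $\sum_{k}\Ee|Z_k|\one(T=k)$.

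\emph{Reverse direction.} Suppose $\lim_n\Ee|Z_T|\one(n\le T<\infty)=0$, where these expectations of nonnegative variables are read in $[0,\infty]$. A limit of $0$ forces the terms to be finite from some index on. So there is an $N$ with $\Ee|Z_T|\one(N\le T<\infty)\le1$. Split
\[
  \Ee|Z_T|\one(T<\infty)=\sum_{k=0}^{N-1}\Ee|Z_k|\one(T=k)+\Ee|Z_T|\one(N\le T<\infty).
\]
The second term is at most $1$. For the first term there are two routes:
\begin{itemize}[nosep]
\item Each summand is at most $\Ee|Z_k|$, which is finite because a submartingale is integrable by definition. The first term is therefore a finite sum of finite numbers.
\item More structurally, since $\{T=k\}\in\cF_k$, the submartingale property gives $\Ee|Z_k|\one(T=k)\le\Ee|Z_{N}|\one(T=k)$ for $k\le N$. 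Summing, the first term is at most $\Ee|Z_N|\one(T<N)\le\Ee|Z_N|<\infty$.
\end{itemize}
Either route shows the whole expectation is finite.

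\emph{Main obstacle.} The only delicate point is a convention, not an estimate. The statement must be read with $\Ee|Z_T|\one(n\le T<\infty)$ taking values in $[0,\infty]$, so that ``the limit equals $0$'' rules out these terms being identically $+\infty$. With that reading, the finite-horizon piece is the only place the hypothesis on $|Z_n|$ enters, through integrability at fixed times. I would state this convention explicitly at the start of the proof. I would also remark that the second route yields the quantitative bound $\Ee|Z_T|\one(T<\infty)\le\Ee|Z_N|+\Ee|Z_T|\one(N\le T<\infty)$ for every $N$. This bound is the form that would be reused when applying the result with $Z_n=M_n$ and $T=T_b$ to verify \eqref{eq:int-condition}.
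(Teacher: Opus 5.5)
Your proof is correct and complete, and your diagnosis of the statement is accurate. The forward direction is dominated convergence applied to the tails of $\sum_k\Ee|Z_k|\one(T=k)$. The reverse direction needs nothing beyond finiteness of $\Ee|Z_k|\one(T=k)$ for each fixed $k$.

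The paper gives no proof of its own here. It imports the result from the companion paper and mentions only a ``monotone-truncation argument.'' Your finite-horizon split is the natural reading of that phrase, so there is no competing route to compare.

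What your first route buys is worth stating explicitly. It uses only fixed-time integrability, so it covers the case the paper actually needs: a nonnegative supermartingale $M_n$. There $|M_n|=M_n$ is a supermartingale, not a submartingale, so the theorem as stated does not literally apply. The paper's parenthetical on this point is loose, and your argument is what makes it rigorous.

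One correction to your closing remark. Your second route, and with it the bound $\Ee|Z_T|\one(T<\infty)\le\Ee|Z_N|+\Ee|Z_T|\one(N\le T<\infty)$, rests on $\Ee[|Z_N|\mid\cF_k]\ge|Z_k|$. That inequality goes the wrong way when $Z_n=M_n$ is a supermartingale, so this is not the form to reuse for \eqref{eq:int-condition}.

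For $M_n$ no such bound is needed. Since $M_n\one(T_b>n)\ge0$ and the stopped process is a supermartingale, $\Ee[M_{T_b}\one(T_b\le n)]\le\Ee[M_{n\wedge T_b}]\le1$. Monotone convergence then gives $\Ee[M_{T_b}\one(T_b<\infty)]\le1$. So the integrability condition holds automatically for every nonnegative supermartingale with $M_0=1$.
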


Two consequences matter for us. First, applied to a nonnegative supermartingale $(M_n)$ -- for which $|M_n|=M_n$ is itself a (super, hence in particular a process whose absolute value forms a) suitable comparison sequence, and to which the same monotone-truncation argument applies -- \Cref{thm:finiteness} says that the integrability hypothesis \eqref{eq:int-condition} is equivalent to the tail condition
\[
  \Ee[M_{T_b}\one(n\le T_b<\infty)]\to0.
\]
So the ``mild integrability condition'' invoked at \eqref{eq:int-condition} is not merely sufficient; it is exactly a statement that the crossing values do not escape to infinity along the tail of the stopping time. We no longer need to assume $O_b<\infty$ as a hypothesis pulled from the air; we can test it.

Second, and more consequentially for the reach of the identity, \Cref{thm:finiteness} is stated for any process whose absolute value is a submartingale, a class strictly larger than nonnegative supermartingales. This is the opening exploited in \Cref{sec:dichotomy}: the three-term accounting is not intrinsically tied to nonnegativity, and the engine that powers it already lives in the wider class.

We will also invoke the finiteness theorem for first-crossing times \cite[Thm.~4.1]{klass2026beyond}, which gives sufficient conditions -- the martingale ``wobbles'' enough and its overshoots are uniformly bounded -- under which a first-crossing time is finite almost surely. We state and use it in \Cref{sec:dichotomy}.

\section{First Pass: the Identity for Absorbing Processes}
\label{sec:absorbing}

We isolate the hypothesis that makes the argument elementary.

\begin{assumption}[Absorbing]
\label{ass:absorbing}
On the event $\{T_b=\infty\}$ that the process never crosses, $M_n\to0$ almost surely, and the family $\{M_n\one(T_b>n)\}_n$ is uniformly integrable.
\end{assumption}

This holds, for instance, whenever a non-crossing path is eventually killed (sent to $0$) or drifts to $0$ with controlled tails -- the situation in the great majority of concrete models, as the examples in \Cref{sec:examples} confirm. It is precisely the hypothesis that lets the ``leftover'' mass on non-crossing paths vanish in the limit, so that no third term survives.

\begin{theorem}[Absorbing identity]
\label{thm:absorbing}
Let $(M_n)$ be a nonnegative supermartingale with $M_0=1$, let $b>1$, and suppose \Cref{ass:absorbing} holds and $O_b<\infty$. Then
\begin{equation}
  \Ee[M_{T_b}\one(T_b<\infty)]=1-D_b
  \quad\Longrightarrow\quad
  \Pp(T_b<\infty)=\frac{1-D_b}{b+O_b}.
  \label{eq:absorbing-identity}
\end{equation}
\end{theorem}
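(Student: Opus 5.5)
The plan is to prove the first equality by a stopped telescoping argument and then pass to the limit using \Cref{ass:absorbing}. The second equality then follows by dividing by $b+O_b$: since $O_b<\infty$, the definition \eqref{eq:overshoot} gives $\Ee[M_{T_b}\one(T_b<\infty)]=(b+O_b)\Pp(T_b<\infty)$. (If $\Pp(T_b<\infty)=0$, the convention is that both sides vanish, and one checks that $1-D_b=0$ is forced in that case.)

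\textbf{Step 1 (finite-horizon identity).} Work with the stopped process $M_{n\wedge T_b}$ and write
\[
M_{n\wedge T_b}-M_0=\sum_{i=1}^n (M_i-M_{i-1})\one(T_b\ge i).
\]
Each increment splits as $(M_i-\Ee[M_i\mid\cF_{i-1}])-\delta_i$. The event $\{T_b\ge i\}$ is $\cF_{i-1}$-measurable, so the martingale-difference part has zero expectation. All terms are integrable, because $M_i$ is integrable and $0\le\delta_i\le M_{i-1}$. Taking expectations and splitting $M_{n\wedge T_b}$ according to whether $T_b\le n$ gives
\[
\Ee[M_{T_b}\one(T_b\le n)]+\Ee[M_n\one(T_b>n)]=1-\sum_{i=1}^n\Ee[\delta_i\one(T_b\ge i)].
\]

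\textbf{Step 2 (pass to the limit).} Let $n\to\infty$ and treat each term separately.
\begin{itemize}
\item The partial sums on the right increase to $D_b$ by monotone convergence, since $\delta_i\ge0$.
\item The first term on the left increases to $\Ee[M_{T_b}\one(T_b<\infty)]$, which is finite by the hypothesis $O_b<\infty$; see \eqref{eq:int-condition}.
\item For the residual term $\Ee[M_n\one(T_b>n)]$, first note that $\one(T_b>n)\to\one(T_b=\infty)$. On $\{T_b=\infty\}$ we have $M_n\to0$ a.s. by \Cref{ass:absorbing}, while on $\{T_b<\infty\}$ the indicator is eventually zero. Hence $M_n\one(T_b>n)\to0$ almost surely. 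The uniform integrability in \Cref{ass:absorbing} (Vitali) upgrades this to convergence of expectations, so $\Ee[M_n\one(T_b>n)]\to0$.
\end{itemize}
Combining the three limits yields $\Ee[M_{T_b}\one(T_b<\infty)]=1-D_b$. As a by-product, $D_b\le1$ is finite.

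\textbf{Main obstacle.} The delicate step is the vanishing of the residual term. Almost-sure convergence alone is not enough, because mass could escape to infinity on surviving paths; this is exactly why \Cref{ass:absorbing} includes uniform integrability, and the argument must invoke it explicitly rather than rely on dominated convergence. One can observe that $M_n\one(T_b>n)<b$ on the event where it is nonzero, since $T_b>n$ forces $M_n<b$. So the family is in fact bounded by $b$, and bounded convergence would also suffice. I would note this as a remark: it shows the uniform-integrability clause is automatic here, and that the real content of the assumption is $M_n\to0$ on $\{T_b=\infty\}$.
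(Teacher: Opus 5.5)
Your proof is correct and follows the paper's argument essentially step for step. Like the paper, you use stopped telescoping to get $\Ee[M_{n\wedge T_b}]=1-\sum_{i\le n}\Ee[\delta_i\one(T_b\ge i)]$, split the left side into a crossing part (monotone convergence, finite since $O_b<\infty$) and a survival part that \Cref{ass:absorbing} sends to zero, and then factor out $(b+O_b)\Pp(T_b<\infty)$. Your side remark is also right and slightly sharper than the paper: since $M_n<b$ on $\{T_b>n\}$, the family $\{M_n\one(T_b>n)\}$ is bounded by $b$, so the uniform-integrability clause of the assumption holds automatically and only the decay $M_n\to0$ on $\{T_b=\infty\}$ carries real content.
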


\begin{proof}
The proof is four short steps: telescope the expectation, telescope it again after stopping, pass to the limit, and factor out the crossing probability.

\emph{Step 1 (telescoping).} Rearranging the defining relation $\Ee[M_n\mid\cF_{n-1}]=M_{n-1}-\delta_n$ of \Cref{sec:setting} and taking expectations gives $\Ee[M_n]=\Ee[M_{n-1}]-\Ee[\delta_n]$. Unrolling from $n$ down to $0$ and using $\Ee[M_0]=1$,
\begin{equation}
  \Ee[M_n]=1-\sum_{i=1}^n\Ee[\delta_i].
  \label{eq:telescope}
\end{equation}
The expected value at time $n$ is the initial unit of mass minus the mass shed so far.

\emph{Step 2 (stopped telescoping).} Run the same argument on the stopped process $M_{n\wedge T_b}$. Its one-step change is $M_{n\wedge T_b}-M_{(n-1)\wedge T_b}$, which equals the unstopped change on $\{T_b\ge n\}$ and is zero otherwise; taking conditional expectations, the stopped process is again a supermartingale with decrement $\delta_n\one(T_b\ge n)$ at step $n$. A decrement therefore contributes only when the process has not already stopped, i.e. on $\{T_b\ge n\}$. Crucially, $\{T_b\ge n\}\in\cF_{n-1}$ and $\delta_n$ is $\cF_{n-1}$-measurable, so the two multiply without disturbing any conditioning, and the analogue of \eqref{eq:telescope} reads
\begin{equation}
  \Ee[M_{n\wedge T_b}]
  =1-\sum_{i=1}^n\Ee[\delta_i\one(T_b\ge i)].
  \label{eq:stopped-telescope}
\end{equation}

\emph{Step 3 (the limit).} Let $n\to\infty$. On the right side, the summands are nonnegative, so the partial sums increase to the full series $D_b$ of \eqref{eq:loss}; hence the right side of \eqref{eq:stopped-telescope} converges to $1-D_b$. For the left side, split the stopped value according to whether the process has crossed by time $n$:
\begin{equation}
  \Ee[M_{n\wedge T_b}]
  =\Ee[M_{T_b}\one(T_b\le n)]+\Ee[M_n\one(T_b>n)].
  \label{eq:decomp}
\end{equation}
The first term is an expectation of the nonnegative random variables $M_{T_b}\one(T_b\le n)$, which increase pointwise to $M_{T_b}\one(T_b<\infty)$; by monotone convergence it rises to $\Ee[M_{T_b}\one(T_b<\infty)]$, finite by $O_b<\infty$. The second term is where \Cref{ass:absorbing} bites: on $\{T_b>n\}$ the path has not crossed, and by hypothesis such paths decay to $0$ with uniform integrability, so $\Ee[M_n\one(T_b>n)]\to0$. Equating the two limits in \eqref{eq:stopped-telescope} gives the left identity in \eqref{eq:absorbing-identity}.

\emph{Step 4 (factoring out the probability).} On $\{T_b<\infty\}$ write $M_{T_b}=b+(M_{T_b}-b)$. Taking expectations and using the definition of $O_b$ in \eqref{eq:overshoot},
\[
  \Ee[M_{T_b}\one(T_b<\infty)]
  =\Ee[M_{T_b}\mid T_b<\infty]\Pp(T_b<\infty)
  =(b+O_b)\Pp(T_b<\infty).
\]
Combining with $\Ee[M_{T_b}\one(T_b<\infty)]=1-D_b$ and solving for the probability gives the right identity in \eqref{eq:absorbing-identity}.
\end{proof}

Already we can harvest the classical bound.

\begin{corollary}[Ville's inequality, absorbing form]
\label{cor:ville-absorbing}
Under the hypotheses of \Cref{thm:absorbing},
\[
  \Pp(T_b<\infty)\le \frac1b.
\]
\end{corollary}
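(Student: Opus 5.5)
The plan is to read Ville's inequality directly off the absorbing identity of \Cref{thm:absorbing}, bounding its numerator from above and its denominator from below. Under the stated hypotheses, \eqref{eq:absorbing-identity} gives
\[
  \Pp(T_b<\infty)=\frac{1-D_b}{b+O_b}.
\]
Two sign facts suffice.

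For the numerator, $D_b\ge0$. Each decrement $\delta_i$ is nonnegative by the supermartingale property, so every summand $\Ee[\delta_i\one(T_b\ge i)]$ in \eqref{eq:loss} is nonnegative. Hence $D_b\ge0$ and $1-D_b\le1$. Step~3 of the proof of \Cref{thm:absorbing} also shows $1-D_b=\Ee[M_{T_b}\one(T_b<\infty)]\ge0$, so the fraction is a genuine nonnegative quantity and no sign issues arise.

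For the denominator, $O_b\ge0$. On $\{T_b<\infty\}$ we have $M_{T_b}\ge b$ by the definition of $T_b$, so $M_{T_b}-b\ge0$ there, and the conditional expectation in \eqref{eq:overshoot} is nonnegative. Hence $b+O_b\ge b>0$.

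Combining the two bounds,
\[
  \Pp(T_b<\infty)=\frac{1-D_b}{b+O_b}\le\frac{1}{b}.
\]

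The only point needing care is the degenerate case $\Pp(T_b<\infty)=0$, where the conditional expectation defining $O_b$ is undefined. In that case the inequality is trivial. To sidestep the issue entirely, one can argue from the unfactored form instead:
\[
  b\,\Pp(T_b<\infty)\le\Ee[M_{T_b}\one(T_b<\infty)]=1-D_b\le1.
\]
This uses only $M_{T_b}\ge b$ on $\{T_b<\infty\}$ and $D_b\ge0$, so it also avoids dividing by $b+O_b$. I expect no real obstacle here, since the corollary is essentially a sign check once \Cref{thm:absorbing} is available.
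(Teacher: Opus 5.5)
Your proposal is correct and matches the paper's proof, which is exactly the sign check $D_b\ge0$ (so $1-D_b\le1$) and $O_b\ge0$ (so $b+O_b\ge b$). Your unfactored variant $b\,\Pp(T_b<\infty)\le\Ee[M_{T_b}\one(T_b<\infty)]=1-D_b\le1$ is a small, welcome refinement. It covers the case $\Pp(T_b<\infty)=0$, where $O_b$ is undefined and the paper does not remark on it.
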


\begin{proof}
Since $D_b\ge0$, the numerator satisfies $1-D_b\le1$; since $O_b\ge0$, the denominator satisfies $b+O_b\ge b$. Hence the ratio is at most $1/b$.
\end{proof}

The proof is more telling than the statement. Ville's bound, established directly in \cite{ville1939etude}, is exactly what one obtains by throwing away $D_b$ (raising the numerator to $1$) and $O_b$ (lowering the denominator to $b$). We will see in \Cref{sec:general} that in full generality the bound discards a third quantity as well.

\begin{example}[Double or absorb -- the prototype]
At each step flip a fair coin: heads (probability $1/2$) doubles, $M_n=2M_{n-1}$; tails (probability $1/2$) kills the process, $M_n=0$. This is a martingale, so $\delta_n\equiv0$ and $D_b=0$, and it is absorbing because a killed path stays at $0$. For $b=2$, crossing needs a single head: $\Pp(T_2<\infty)=1/2$, and the process lands exactly on $2$, so $O_2=0$ and \eqref{eq:absorbing-identity} gives $(1-0)/(2+0)=1/2$ -- Ville is tight. For $b=3$ the reachable values are $\{0,2,4,\ldots\}$, so the process must reach $4$: $\Pp(T_3<\infty)=1/4$, the overshoot is $O_3=4-3=1$, and $(1-0)/(3+1)=1/4$. The factor-of-$4/3$ gap below Ville's $1/3$ is pure overshoot.
\end{example}

\section{Second Pass: the General Identity}
\label{sec:general}

Now we drop \Cref{ass:absorbing}. Looking back at the proof of \Cref{thm:absorbing}, every step survives except the vanishing of $\Ee[M_n\one(T_b>n)]$ in Step 3. When non-crossing paths do not decay to zero, that term contributes a genuine, positive amount in the limit. That amount is the missing term.

\begin{definition}[Survival residual]
\label{def:residual}
\[
  R_b=\lim_{n\to\infty}\Ee[M_n\one(T_b>n)].
\]
\end{definition}

The first task is to show that this limit exists at all; the second is to read off the corrected identity. Both follow from monotonicity, with the conservation law of \Cref{sec:conservation} supplying the interpretation.

\begin{theorem}[General Ville identity]
\label{thm:general}
Let $(M_n)$ be a nonnegative supermartingale with $M_0=1$, let $b>1$, and assume only the integrability condition $\Ee[M_{T_b}\one(T_b<\infty)]<\infty$. Then the limit in \Cref{def:residual} exists, $0\le R_b<\infty$, and
\begin{equation}
  \Ee[M_{T_b}\one(T_b<\infty)]=1-D_b-R_b,
  \label{eq:mass-balance}
\end{equation}
so that
\begin{equation}
  \Pp(T_b<\infty)=\frac{1-D_b-R_b}{b+O_b}.
  \label{eq:general-identity}
\end{equation}
\end{theorem}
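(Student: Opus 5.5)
The plan is to rerun Steps 1--3 of the proof of \Cref{thm:absorbing} verbatim and stop just before the point where \Cref{ass:absorbing} was invoked. Steps 1 and 2 used only the supermartingale property and the $\cF_{i-1}$-measurability of $\delta_i\one(T_b\ge i)$. They therefore give, for every $n$,
\[
  1-\sum_{i=1}^n\Ee[\delta_i\one(T_b\ge i)]
  =\Ee[M_{T_b}\one(T_b\le n)]+\Ee[M_n\one(T_b>n)],
\]
by \eqref{eq:stopped-telescope} and \eqref{eq:decomp}. This equality holds with no absorbing hypothesis at all, and it is the only input we need.

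\emph{Existence of the limit.} Solve the display for the survival term:
\[
  \Ee[M_n\one(T_b>n)]=1-\sum_{i=1}^n\Ee[\delta_i\one(T_b\ge i)]-\Ee[M_{T_b}\one(T_b\le n)].
\]
Each of the two subtracted quantities is nondecreasing in $n$. The partial sums increase to $D_b$ because the summands are nonnegative. The crossing term increases to $\Ee[M_{T_b}\one(T_b<\infty)]$ by monotone convergence, and this limit is finite by hypothesis. A convergent sequence that is unbounded is impossible here, so I also need $D_b<\infty$. This follows because the left side is nonnegative, which gives $\sum_{i\le n}\Ee[\delta_i\one(T_b\ge i)]\le 1$ for all $n$, and hence $D_b\le1$.

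So the right side converges, and the limit $R_b$ in \Cref{def:residual} exists and equals $1-D_b-\Ee[M_{T_b}\one(T_b<\infty)]$. This is exactly \eqref{eq:mass-balance}. Finally, $R_b$ is nonnegative because it is a limit of nonnegative terms, and it is at most $1$.

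\emph{The probability formula.} This is Step 4 of the absorbing proof unchanged. On $\{T_b<\infty\}$ write $M_{T_b}=b+(M_{T_b}-b)$, which gives $\Ee[M_{T_b}\one(T_b<\infty)]=(b+O_b)\Pp(T_b<\infty)$. Dividing then yields \eqref{eq:general-identity}, and the division is legitimate because $b+O_b\ge b>0$.

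One degenerate case needs a remark. If $\Pp(T_b<\infty)=0$, the conditional expectation defining $O_b$ is undefined. In that case \eqref{eq:mass-balance} gives $1-D_b-R_b=0$, so the identity holds for any convention on $O_b$, and I will state that explicitly.

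\emph{Main obstacle.} The only delicate point is that existence of the limit cannot be borrowed from uniform integrability, since that is exactly what we dropped. It has to come from the monotonicity of the other two terms together with finiteness of the crossing term. This is why the integrability hypothesis is needed: without it the limit could take the form $\infty-\infty$.

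I will add a remark connecting this to \Cref{thm:conservation}, though the proof does not need it. Applied to the martingale part of the Doob decomposition, $X_n=M_n+\sum_{i\le n}\delta_i-1$, with $T=T_b$, the conservation law identifies $R_b$ as the balancing shortfall $-L$ adjusted by the loss terms. This gives $R_b$ its interpretation as mass carried by surviving paths.
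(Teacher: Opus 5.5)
Your proof is correct and follows the paper's own argument. Both use the stopped telescoping and the crossing/survival split, obtain $R_b$ as the difference of two monotone convergent sequences, and finish with Step 4. Your bound $D_b\le1$, which you get from nonnegativity of the survival term, is the same fact the paper gets from $\Ee[M_{n\wedge T_b}]$ being nonincreasing and nonnegative.

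One aside in your ``main obstacle'' paragraph is off. The same inequality you derive already gives $\Ee[M_{T_b}\one(T_b\le n)]\le 1$ for every $n$, so the integrability hypothesis holds automatically for nonnegative supermartingales, and no $\infty-\infty$ form can arise.
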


\begin{proof}
Steps 1 and 2 of \Cref{thm:absorbing} hold verbatim, giving the stopped telescoping \eqref{eq:stopped-telescope}. We replace only Step 3.

The stopped process $M_{n\wedge T_b}$ is a nonnegative supermartingale, so $\Ee[M_{n\wedge T_b}]$ is nonincreasing in $n$ and bounded below by $0$; therefore it converges. By \eqref{eq:stopped-telescope} its limit is $1-D_b$, where $D_b$ is the full series \eqref{eq:loss}, and in particular $D_b\in[0,1]$ so the limit lies in $[0,1]$.

Now decompose as in \eqref{eq:decomp}:
\[
  \Ee[M_{n\wedge T_b}]
  =\underbrace{\Ee[M_{T_b}\one(T_b\le n)]}_{\text{crossing part}}
   +\underbrace{\Ee[M_n\one(T_b>n)]}_{\text{survival part}}.
\]
The crossing part increases to $\Ee[M_{T_b}\one(T_b<\infty)]$ by monotone convergence, finite by hypothesis. The left side converges (just shown), and the crossing part converges; hence the survival part, being the difference of two convergent sequences, converges as well, and its limit is
\begin{equation}
  R_b=\lim_{n\to\infty}\Ee[M_n\one(T_b>n)]
  =(1-D_b)-\Ee[M_{T_b}\one(T_b<\infty)].
  \label{eq:R-balance}
\end{equation}
This is nonnegative because every $M_n\ge0$, so each pre-limit term is nonnegative; and it is finite because $D_b\ge0$ and the subtracted expectation is finite. Rearranging \eqref{eq:R-balance} is \eqref{eq:mass-balance}. Factoring the left side as in Step 4 of \Cref{thm:absorbing}, $(b+O_b)\Pp(T_b<\infty)=1-D_b-R_b$, which is \eqref{eq:general-identity}.
\end{proof}

\begin{remark}[Why this is the conservation law in disguise]
\label{rem:conservation-disguise}
\Cref{thm:general} is the supermartingale face of \Cref{thm:conservation}. In the martingale case $\delta\equiv0$, so $D_b=0$ and \eqref{eq:mass-balance} reads $\Ee[M_{T_b}\one(T_b<\infty)]=1-R_b$. Apply \Cref{thm:conservation} to the mean-zero martingale $X_n=M_n-1$ with $T=T_b$. Its balance constant is
\[
  L=\Ee[(M_{T_b}-1)\one(T_b<\infty)]
    =(1-R_b)-\Pp(T_b<\infty),
\]
and the carried-mass identity $\lim_n\Ee[(M_n-1)\one(T_b>n)]=-L$ expands, using $\Pp(T_b>n)\to\Pp(T_b=\infty)$, to $R_b-\Pp(T_b=\infty)=-L$. The two displays are the same equation. So the survival residual $R_b$ is exactly the limiting mass that \Cref{thm:conservation} identifies as carried by the paths still in motion; the conservation law is what guarantees the limit defining $R_b$ exists and balances.
\end{remark}

Now the promised reduction: the first pass is the special case of the second.

\begin{corollary}[The absorbing identity as a special case]
\label{cor:absorbing-special}
If \Cref{ass:absorbing} holds, then $R_b=0$ and \Cref{thm:general} reduces to \Cref{thm:absorbing}.
\end{corollary}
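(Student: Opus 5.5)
The plan is to show directly that $R_b=0$ under \Cref{ass:absorbing}, and then substitute into \Cref{thm:general}. First I will check that the integrability hypothesis of \Cref{thm:general} holds. \Cref{thm:absorbing} assumes $O_b<\infty$, and by \eqref{eq:int-condition} this is the same as $\Ee[M_{T_b}\one(T_b<\infty)]<\infty$. So \Cref{thm:general} applies: the limit $R_b=\lim_n\Ee[M_n\one(T_b>n)]$ exists and \eqref{eq:mass-balance} holds. It then remains only to identify this limit as $0$.

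To do that, I will establish pointwise convergence $M_n\one(T_b>n)\to0$ almost surely and then upgrade it to convergence in $L^1$. I split $\Omega$ into two events.

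On $\{T_b<\infty\}$, the indicator $\one(T_b>n)$ is eventually $0$. Hence the product vanishes for all large $n$.

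On $\{T_b=\infty\}$, the indicator is identically $1$. By \Cref{ass:absorbing}, $M_n\to0$ almost surely there.

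Together these give $M_n\one(T_b>n)\to0$ almost surely. The family $\{M_n\one(T_b>n)\}_n$ is uniformly integrable, again by \Cref{ass:absorbing}. Vitali's convergence theorem (almost-sure convergence plus uniform integrability implies $L^1$ convergence) then yields $\Ee[M_n\one(T_b>n)]\to0$, that is, $R_b=0$.

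Finally, with $R_b=0$, \eqref{eq:mass-balance} becomes $\Ee[M_{T_b}\one(T_b<\infty)]=1-D_b$, and \eqref{eq:general-identity} becomes $\Pp(T_b<\infty)=(1-D_b)/(b+O_b)$. These are exactly the two halves of \eqref{eq:absorbing-identity}, so \Cref{thm:general} reduces to \Cref{thm:absorbing}.

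The only point needing care is the pointwise convergence on $\{T_b<\infty\}$. \Cref{ass:absorbing} says nothing about $M_n$ on that event, but the indicator makes it irrelevant. Beyond that, the argument is routine once Vitali's theorem is invoked.
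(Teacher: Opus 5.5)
Your proposal is correct and follows the paper's own proof: almost-sure convergence $M_n\one(T_b>n)\to0$, combined with the uniform integrability in \Cref{ass:absorbing}, gives $R_b=0$, and substituting this into \eqref{eq:mass-balance}--\eqref{eq:general-identity} recovers \eqref{eq:absorbing-identity}. You also spell out two details the paper leaves implicit. The first is the split into $\{T_b<\infty\}$ and $\{T_b=\infty\}$ for the pointwise limit. The second is the check that $O_b<\infty$ supplies the integrability hypothesis of \Cref{thm:general}.
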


\begin{proof}
Under \Cref{ass:absorbing}, $M_n\one(T_b>n)\to0$ almost surely with uniform integrability, so $R_b=\lim_n\Ee[M_n\one(T_b>n)]=0$. Substituting $R_b=0$ into \eqref{eq:mass-balance}--\eqref{eq:general-identity} returns \eqref{eq:absorbing-identity}.
\end{proof}

\section{An Organizing Dichotomy, and the Domain Beyond Supermartingales}
\label{sec:dichotomy}

The two passes leave us with three correction terms, $O_b,D_b,R_b$, presented so far as a list. The companion paper supplies something better than a list: a small set of structural conditions that determines, term by term, which corrections are present and which vanish. We make that organization explicit here, because it converts the identity from a formula into a diagnostic. We then record the corresponding widening of the identity's domain.

\subsection{Which condition governs which term}

The relevant input is the companion paper's almost-sure finiteness theorem for first-crossing times, which we restate in the present notation.

\begin{theorem}[Wobbling and bounded overshoot force crossing \cite{klass2026beyond}]
\label{thm:wobbling}
Let $\{M_n,\cF_n\}$ be a mean-zero martingale and fix a level $r\ge0$. Let $T_r$ be the first $n\ge1$ with $M_n>r$, and $\infty$ if no such $n$ exists. Suppose
\begin{enumerate}[label=(\roman*)]
\item (sufficient wobbling)
\[
  \lim_{\varepsilon\downarrow0}\limsup_{k\to\infty}
  \Pp\left(\sup_{n\ge k}|M_n-M_k|>\varepsilon\right)=1,
\]
\item (bounded expected overshoot) there is $B<\infty$ with $\Ee[M_n\mid T_r=n]\le r+B$ a.s. for every $n\ge1$.
\end{enumerate}
Then $\Pp(T_r<\infty)=1$ and $\Ee[M_{T_r}]\le r+B$; moreover $\limsup_{n\to\infty}M_n=+\infty$ almost surely. Condition (i) ensures the martingale wobbles arbitrarily far out; condition (ii) bounds the overshoot of $M_{T_r}$ beyond $r$. Dropping either condition can make $\Pp(T_r=\infty)>0$, as the counterexamples of \cite{klass2026beyond} demonstrate.
\end{theorem}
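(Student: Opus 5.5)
The plan is to argue by contradiction on the event $\{T_r=\infty\}$. I would combine a uniform $L^1$ bound on the stopped martingale $M_{n\wedge T_r}$, supplied by condition (ii), with the martingale convergence theorem. The point is that the stopped martingale then converges almost surely. Convergence on $\{T_r=\infty\}$ forces the oscillations $\sup_{n\ge k}|M_n-M_k|$ to die out there, which is incompatible with condition (i) unless $\Pp(T_r=\infty)=0$.

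\emph{Step 1 (integrability of the stopped value).} On $\{T_r=n\}$ we have $M_n>r\ge0$. Condition (ii) gives $\Ee[M_n\one(T_r=n)]\le(r+B)\Pp(T_r=n)$, and summing over $n$ yields $\Ee[M_{T_r}\one(T_r<\infty)]\le(r+B)\Pp(T_r<\infty)<\infty$. So the hypothesis of \Cref{thm:conservation} holds, and a finite balance constant $L$ exists. This step is only bookkeeping; what I really need from it is the bound in Step 2.

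\emph{Step 2 ($L^1$-boundedness and convergence).} Pathwise, $M_{n\wedge T_r}^+\le r+M_{T_r}\one(T_r\le n)$, because before crossing the process sits at or below $r$. Hence
\[
  \sup_n\Ee[M_{n\wedge T_r}^+]\le r+(r+B)<\infty .
\]
Since $\Ee[M_{n\wedge T_r}]=\Ee[M_0]=0$, we have $\Ee|M_{n\wedge T_r}|=2\Ee[M_{n\wedge T_r}^+]$, so the stopped martingale is bounded in $L^1$. Doob's convergence theorem then gives almost-sure convergence of $M_{n\wedge T_r}$. On $\{T_r=\infty\}$ this sequence equals $M_n$, so $M_n$ converges, and therefore $\sup_{n\ge k}|M_n-M_k|\to0$ almost surely on that event.

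\emph{Step 3 (contradiction with wobbling).} Write $p=\Pp(T_r=\infty)$ and fix $\varepsilon>0$. Then
\[
  \Pp\Bigl(\sup_{n\ge k}|M_n-M_k|>\varepsilon\Bigr)\le(1-p)+\Pp\Bigl(T_r=\infty,\ \sup_{n\ge k}|M_n-M_k|>\varepsilon\Bigr).
\]
The last term tends to $0$ as $k\to\infty$ by dominated convergence. Hence the $\limsup_k$ is at most $1-p$ for every $\varepsilon$, and (i) forces $p=0$. With $T_r<\infty$ almost surely, the bound from Step 1 becomes $\Ee[M_{T_r}]\le r+B$.

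\emph{Step 4 (the main obstacle: $\limsup M_n=+\infty$).} Here I would rerun Steps 1--3 at every level $r'\ge r$. Condition (i) does not depend on the level, so the only issue is whether the overshoot bound (ii) is available at $r'$. If it is, then $\Pp(T_{r'}<\infty)=1$ for all $r'$, and letting $r'\to\infty$ gives $\limsup M_n=+\infty$ almost surely. The delicate part is that (ii) is stated only at the single level $r$. I would first try to read the hypothesis as holding at all levels, in the uniform form the companion paper uses. Failing that, I would restart the process at the finite time $T_r$ and apply the argument conditionally on $\cF_{T_r}$, using Step 1 to control the restarted level. I expect this uniformity-in-the-level issue, not the convergence argument, to be the real sticking point.
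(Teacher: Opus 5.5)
\textbf{Steps 1--3 are correct.} They give a complete, self-contained proof of $\Pp(T_r<\infty)=1$ and $\Ee[M_{T_r}]\le r+B$. The paper itself offers no argument for this theorem; it only cites the companion. The argument runs as follows:
\begin{enumerate}[label=(\alph*)]
\item The pathwise bound $M^+_{n\wedge T_r}\le r+M_{T_r}\one(T_r\le n)$, together with $\Ee[M_{n\wedge T_r}]=0$, makes the stopped martingale $L^1$-bounded.
\item Doob's theorem then forces $M_n$ to converge on $\{T_r=\infty\}$.
\item Condition (i), which you in effect use only in the form ``$M_n$ converges with probability zero,'' rules that event out.
\end{enumerate}
The appeal to \Cref{thm:conservation} in Step 1 is not needed.

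\textbf{The gap is Step 4, and it cannot be closed from the stated hypotheses.} With (ii) imposed only at the single level $r$, the clause $\limsup_n M_n=+\infty$ a.s.\ is false. So your restart-at-$T_r$ fallback must fail: Step 1 controls only the landing value at level $r$, and says nothing about how far the process lands past any later level.

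For a counterexample, take $r=0$ and $M_0=0$. Let $\epsilon_n$ be i.i.d.\ fair signs, and let $Y_n$ be independent with $\Pp(Y_n=-1)=1-2^{-n}$ and $\Pp(Y_n=2^n-1)=2^{-n}$, so that $\Ee Y_n=0$. Set
\[
  M_n=M_{n-1}+\epsilon_n\one(T_0\ge n)+Y_n\one(T_0<n).
\]
This is legitimate since $\{T_0\ge n\}\in\cF_{n-1}$ for $\cF_n=\sigma(\epsilon_k,Y_k:k\le n)$, and it is a mean-zero martingale. Check the hypotheses and the conclusion:
\begin{enumerate}[label=(\alph*)]
\item Up to $T_0$ the process is a simple random walk, so $T_0<\infty$ a.s.\ and $M_{T_0}=1$. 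Hence (ii) holds with $B=1$.
\item Every increment has modulus at least $1$, so the probability in (i) equals $1$ for all $k$ and all $\varepsilon<1$. Hence (i) holds.
\item On $\{Y_n=-1\text{ for all }n\}$, which has probability $\prod_{n\ge1}(1-2^{-n})>0$, we get $M_n=1-(n-T_0)\to-\infty$. So $\limsup_n M_n=+\infty$ fails with positive probability.
\end{enumerate}

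\textbf{The fix.} The ``moreover'' clause needs (ii) as an additional hypothesis at a sequence of levels $r_j\uparrow\infty$, with $B$ allowed to depend on $r_j$. This matches the ``uniformly bounded expected overshoot beyond the level'' reading the paper itself adopts later. Under that hypothesis your first option is a full proof. Steps 1--3 at each $r_j$ give $\Pp(T_{r_j}<\infty)=1$, hence $\sup_n M_n=+\infty$ a.s. Since each $M_n$ is finite, this forces $\limsup_n M_n=+\infty$ a.s. State the level-uniform reading explicitly as a hypothesis and drop the restart alternative.
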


Read against the identity \eqref{eq:general-identity}, this theorem is exactly the statement that controls the residual. We record the resulting correspondence, which is the organizing principle of the paper.

\begin{proposition}[The three terms, by cause]
\label{prop:three-causes}
For a nonnegative supermartingale $(M_n)$ with $M_0=1$ crossing the level $b>1$:
\begin{enumerate}[label=(\alph*)]
\item The martingale-versus-supermartingale gap governs $D_b$. If $(M_n)$ is a martingale then $\delta_n\equiv0$ and $D_b=0$; the loss term is nonzero precisely to the extent that the process is a strict supermartingale, that is, sheds predictable mass before crossing.
\item Bounded overshoot governs $O_b$. If the conditional landing value satisfies $\Ee[M_{T_b}\mid \cF_{T_b-1},T_b=n]\le b+\omega$ a.s. for a constant $\omega$, then $O_b\le\omega$ (\Cref{sec:overshoot}); on a lattice that includes $b$, $O_b=0$. The overshoot is nonzero precisely to the extent that the increments can carry the process strictly past the boundary.
\item Wobbling governs $R_b$. If, in the martingale case, conditions (i)--(ii) of \Cref{thm:wobbling} hold (sufficient wobbling and bounded overshoot), then $\Pp(T_b<\infty)=1$, so the survival event is null and $R_b=0$. The residual is nonzero precisely to the extent that wobbling fails -- that is, to the extent that paths can settle and survive forever without crossing.
\end{enumerate}
\end{proposition}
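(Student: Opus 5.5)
I will prove the three parts separately, since each is a direct reading of an earlier result against the terms of \Cref{thm:general}; the ``precisely to the extent'' clauses I treat as interpretive glosses on the quantitative implications.

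For (a), I will argue straight from the definitions. If $(M_n)$ is a martingale then $\Ee[M_n\mid\cF_{n-1}]=M_{n-1}$, so $\delta_n\equiv0$ and every summand of \eqref{eq:loss} vanishes, giving $D_b=0$. Conversely, because the summands $\Ee[\delta_i\one(T_b\ge i)]$ are nonnegative, $D_b>0$ exactly when some decrement is positive with positive probability on $\{T_b\ge i\}$. That is the sense of ``sheds predictable mass before crossing.''

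For (b), I will condition on the crossing time. Writing $\Ee[M_{T_b}\one(T_b<\infty)]=\sum_n\Ee[M_n\one(T_b=n)]$ and using $\{T_b=n\}\in\cF_n$ together with the tower property over $\cF_{n-1}$, the hypothesis gives $\Ee[M_n\one(T_b=n)]\le(b+\omega)\Pp(T_b=n)$. Summing over $n$ and dividing by $\Pp(T_b<\infty)$, whenever that probability is positive, yields $b+O_b\le b+\omega$. This also shows that \eqref{eq:int-condition} holds automatically. On a lattice containing $b$ whose steps cannot jump past $b$, which is the intended reading, $M_{T_b}=b$ and hence $O_b=0$. I will cite \Cref{sec:overshoot} only for the general family version.

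For (c), I will apply \Cref{thm:wobbling} to the mean-zero martingale $X_n=M_n-1$ at level $r=b-1$. There is a mismatch to handle: that theorem uses a strict crossing $X_n>r$, whereas $T_b$ uses $M_n\ge b$. Since $\{M_n>b\}\subseteq\{M_n\ge b\}$, the strict-crossing time dominates $T_b$, so almost-sure finiteness of the strict time gives $\Pp(T_b<\infty)=1$. The hypotheses (i)--(ii) are then read for $X$, and wobbling is unaffected by the constant shift. Once $\Pp(T_b=\infty)=0$, I get $0\le\Ee[M_n\one(T_b>n)]$, and I need to show this tends to $0$.

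This last limit is the main obstacle, because finiteness of $T_b$ alone does not force the survival mass to vanish without some uniform integrability. My route is the balance \eqref{eq:R-balance}: in the martingale case $R_b=1-\Ee[M_{T_b}]$, since $\one(T_b<\infty)=1$ a.s. So I need $\Ee[M_{T_b}]=1$, the optional-stopping equality. I would obtain it from \Cref{thm:conservation} applied to $X_n$ and $T_b$, which is legitimate because $\Ee|X_{T_b}|\le b+B+1<\infty$ by (ii). That theorem gives $\lim_n\Ee[X_n\one(T_b>n)]=-L$. Identifying $L=0$ requires showing $\Ee[M_n\one(T_b>n)]\to0$, and this is exactly the point I expect to be delicate. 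My first attempt would use the bounded-overshoot control (ii) together with nonnegativity: on $\{T_b>n\}$ we have $0\le M_n<b$, so $\Ee[M_n\one(T_b>n)]\le b\,\Pp(T_b>n)\to0$. This bound closes the argument and gives $R_b=0$.
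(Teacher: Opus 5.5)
Your proof is correct and follows the paper's route: (a) from the definition of $\delta_n$, (b) by conditioning on $\{T_b=n\}$ as in the proof of \Cref{thm:constant-overshoot}, and (c) via \Cref{thm:wobbling} followed by $\Ee[M_n\one(T_b>n)]\le b\,\Pp(T_b>n)\to0$, which is what the paper's appeal to uniform integrability of the stopped family amounts to. That bound uses only $0\le M_n<b$ on $\{T_b>n\}$, not condition (ii), so you can drop the detour through \Cref{thm:conservation} (which was circular in any case); note also a caveat you share with the paper: the hypothesis of (c) can never hold for a nonnegative martingale, since such a process converges a.s.\ (so wobbling condition (i) fails) and Ville already gives $\Pp(T_b<\infty)\le 1/b<1$, so (c) is true only vacuously.
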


\begin{proof}
(a) is \Cref{sec:setting}. (b) is \Cref{thm:constant-overshoot} below, applied with $\omega$ the uniform overshoot bound. (c): under (i)--(ii), \Cref{thm:wobbling} gives $\Pp(T_b<\infty)=1$, whence $\Pp(T_b>n)\to0$ and, $R_b=\lim_n\Ee[M_n\one(T_b>n)]$ is the limit of expectations over events of vanishing probability for the uniformly integrable stopped family, so $R_b=0$. The converse directions (each term ``nonzero precisely to the extent that'' the condition fails) are exhibited by the examples of \Cref{sec:examples} and the counterexamples of \cite{klass2026beyond}: Counter-example 1 there (wobbling fails) produces $\Pp(T=\infty)>0$ and hence, in the corresponding nonnegative process, $R_b>0$; \Cref{ex:never-crosses} below realizes the same phenomenon concretely.
\end{proof}

This is the rigorous content of the informal phrase ``under appropriate integrability conditions'' on which the absorbing derivation silently leans. The phrase unpacks into three independent dials: turn off the supermartingale gap and $D_b$ vanishes; bound the overshoot and $O_b$ is controlled; supply enough wobbling and $R_b$ vanishes. A practitioner reading a loose Ville bound can now ask which dial is responsible.

\begin{proposition}[When the residual vanishes]
\label{prop:residual-vanishes}
Suppose $(M_n)$ satisfies the two conditions of \Cref{thm:wobbling}: sufficient wobbling and uniformly bounded expected overshoot beyond the level. Then $\Pp(T_b<\infty)=1$; in particular the survival event is null, $R_b=0$, and \eqref{eq:general-identity} reads $1=(1-D_b)/(b+O_b)$. Dropping either condition can make $\Pp(T_b=\infty)>0$ and $R_b>0$.
\end{proposition}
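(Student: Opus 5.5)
The plan is to reduce the statement to \Cref{thm:wobbling} by recentring, and then read off $R_b=0$ from the general identity \Cref{thm:general}. The martingale case is implicit, since \Cref{thm:wobbling} is stated for martingales, so $D_b=0$ throughout.

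\emph{Step 1 (recentring).} Apply \Cref{thm:wobbling} to the mean-zero martingale $X_n=M_n-1$ at level $r=b-1>0$. Its first strict crossing time $T_r=\inf\{n:M_n>b\}$ dominates $T_b=\inf\{n:M_n\ge b\}$. Condition (ii) for $X$ is the bounded-overshoot hypothesis of the proposition, with $B$ playing the role of $\omega$ in \Cref{prop:three-causes}(b). The theorem then gives $\Pp(T_r<\infty)=1$, and hence $\Pp(T_b<\infty)=1$. This is the first claim, and the survival event $\{T_b=\infty\}$ is null.

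\emph{Step 2 (the residual).} A null survival event does not by itself force $R_b=\lim_n\Ee[M_n\one(T_b>n)]$ to vanish, because mass can sit on the shrinking sets $\{T_b>n\}$. I would close this in one of two ways.

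\begin{itemize}
\item \emph{Via the conservation law.} Apply \Cref{thm:conservation} to $X$ and $T_b$, as in \Cref{rem:conservation-disguise}. Then show the balance constant satisfies $L=0$, using the overshoot bound $\Ee[M_{T_r}]\le r+B$ supplied by \Cref{thm:wobbling}.
\item \emph{Via \Cref{thm:general} directly.} This identity gives $R_b=1-\Ee[M_{T_b}]$. By Step 1 and the definition of $O_b$, $\Ee[M_{T_b}]=b+O_b\ge b>1$. So $R_b=1-(b+O_b)<0$, which contradicts $R_b\ge0$.
\end{itemize}

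\emph{Main obstacle.} Step 2 is where I expect the real difficulty, and the second route shows what it is. For a nonnegative martingale with $M_0=1$ and $b>1$, the conclusion $\Pp(T_b<\infty)=1$ is incompatible with the mass bound $\Ee[M_{T_b}\one(T_b<\infty)]\le 1$.

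The cleanest diagnosis is that hypothesis (i) can never hold here. A nonnegative supermartingale converges almost surely (Doob), so for each $\varepsilon>0$ we have $\Pp(\sup_{n\ge k}|M_n-M_k|>\varepsilon)\to0$ as $k\to\infty$. Hence the double limit in (i) equals $0$, not $1$. The same conclusion follows from the clause $\limsup_n M_n=+\infty$ a.s. in \Cref{thm:wobbling}, which is also impossible here.

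So I would write the proof as follows. First give Step 1, which proves the implication formally. Then note that, by the previous paragraph, the hypotheses are never jointly satisfied by a nonnegative supermartingale with $M_0=1$, so the statements $R_b=0$ and $1=(1-D_b)/(b+O_b)$ hold vacuously. I would flag this explicitly rather than hide it.

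For the final sentence about dropping either condition, I would point to \Cref{ex:never-crosses} and Counter-example~1 of \cite{klass2026beyond}. In those, wobbling fails, paths settle at a positive level without crossing, and $R_b=\lim_n\Ee[M_n\one(T_b>n)]>0$ can be computed directly.
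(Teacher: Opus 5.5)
Your proposal is correct, and it is more careful than the paper's own proof. The paper disposes of the proposition in one line (``immediate from \Cref{thm:wobbling} and part (c) of \Cref{prop:three-causes}''). It takes $\Pp(T_b<\infty)=1$ from \Cref{thm:wobbling}, exactly as in your Step 1. It then obtains $R_b=0$ by asserting that $\Ee[M_n\one(T_b>n)]$ is an expectation over events of vanishing probability for a uniformly integrable stopped family. That uniform integrability is asserted, not proved; you are right that a null survival event alone does not give $R_b=0$. More importantly, the paper never notices that its hypotheses have no instances.

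You replace that step with the observation that (i) fails for every nonnegative supermartingale. Doob's almost-sure convergence to a finite limit gives $\Pp(\sup_{n\ge k}|M_n-M_k|>\varepsilon)\to0$ for each $\varepsilon>0$. This covers strict supermartingales too, so your restriction to the martingale case costs nothing. Consistently, $\Pp(T_b<\infty)=1$ contradicts \Cref{cor:ville-general}. The displayed conclusion $1=(1-D_b)/(b+O_b)$ is also impossible, since the right side is at most $1/b<1$. Your first bullet in Step 2 would only have reproduced this contradiction: with $\Pp(T_b<\infty)=1$ the balance constant is $L=\Ee[M_{T_b}]-1\ge b-1>0$, and \Cref{rem:conservation-disguise} gives $R_b=-L<0$.

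So the paper's route buys only the appearance of a criterion for $R_b=0$. Yours buys a sound proof and the honest conclusion that the proposition is vacuous in the nonnegative class. That is also the only reading under which the statement is true. For a signed process as in \Cref{rem:domain-extension}, where (i) can hold, take $M_n=1+S_n$ with $S$ a simple symmetric walk and $b=2$. It satisfies (i)--(ii) and crosses almost surely, yet $R_2=-1$ and $(1-D_2)/(2+O_2)=1/2$.

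For the final sentence you match the paper. In \Cref{ex:never-crosses}, (i) fails and (ii) holds vacuously, and $R_b>0$. Meanwhile $\Pp(T_b=\infty)\ge1-1/b>0$ holds for every such process by Ville anyway. A witness that drops (ii) but keeps (i) cannot exist among nonnegative supermartingales, so that clause can only be read loosely, in your write-up as in the paper's.
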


\begin{proof}
Immediate from \Cref{thm:wobbling} and part (c) of \Cref{prop:three-causes}.
\end{proof}

\subsection{The domain beyond nonnegative supermartingales}

The finiteness characterization \Cref{thm:finiteness} was stated for any process whose absolute value is a submartingale -- a class that contains the nonnegative supermartingales but is strictly larger. We note here what survives at that generality, because it widens where the accounting can be applied.

\begin{remark}[$|M_n|$-submartingale e-processes]
\label{rem:domain-extension}
The three ingredients of the identity require, separately: (i) a telescoping of expected mass, which uses only the (super)martingale structure of the relevant stopped process; (ii) a finite crossing expectation $\Ee[M_{T_b}\one(T_b<\infty)]$, whose finiteness is characterized by \Cref{thm:finiteness} for any $|M_n|$-submartingale; and (iii) the existence of the limit $R_b$, supplied by the conservation law \Cref{thm:conservation}, which is stated for mean-zero martingales and so applies to the centered process $X_n=M_n-1$ regardless of sign constraints on $M_n$. Consequently, the conservation balance $\Ee[X_T\one(T<\infty)]=-\lim_n\Ee[X_n\one(T>n)]$ that underlies the residual is available for any process of the form ``constant plus mean-zero martingale,'' and the finiteness of the booked mass is governed by the same if-and-only-if. What does not transfer for free is the sign of the three correction terms: nonnegativity of $D_b,O_b,R_b$ used the nonnegativity of $M_n$ at several points (for instance in concluding $R_b\ge0$ from $M_n\ge0$). For a signed process whose absolute value is a submartingale, the corresponding decomposition holds as an identity but the terms need no longer be individually nonnegative, so the one-line passage to Ville's inequality (\Cref{cor:ville-general}) is the part that genuinely uses nonnegativity. The identity is thus more robust than the inequality it implies: the bookkeeping survives in the wider class, while the bound is a feature of the nonnegative corner.
\end{remark}

This observation matters for e-values because test supermartingales, and processes built from them by operations that preserve the $|M_n|$-submartingale property, sit naturally in the wider class; the accounting of \Cref{sec:evalues} continues to identify where evidence is lost, even where the clean $1/\alpha$ bound would require the nonnegative structure to be exactly Ville-valid.

\section{Ville's Inequality and the Exact Structure of its Slack}
\label{sec:ville}

With the general identity in hand we re-derive the bound once more, now in full generality, then quantify the gap exactly, and finally record the continuous-time case in which the gap closes.

\begin{corollary}[Ville's inequality, general form]
\label{cor:ville-general}
For any nonnegative supermartingale with $M_0=1$ and any $b>1$ satisfying the hypotheses of \Cref{thm:general},
\[
  \Pp(T_b<\infty)\le\frac1b.
\]
\end{corollary}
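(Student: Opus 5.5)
The plan is to read the bound directly off \Cref{thm:general}, exactly as \Cref{cor:ville-absorbing} was read off \Cref{thm:absorbing}, but now with three discarded quantities instead of two. Under the stated hypotheses \Cref{thm:general} gives
\[
  (b+O_b)\,\Pp(T_b<\infty)=1-D_b-R_b ,
\]
and it suffices to check that each of $D_b$, $R_b$, $O_b$ is nonnegative. Then the right side is at most $1$ and the factor $b+O_b$ is at least $b$.

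First, $D_b\ge0$: it is a series of expectations of $\delta_i\one(T_b\ge i)$, and each $\delta_i\ge0$ by the supermartingale property. Second, $R_b\ge0$: \Cref{thm:general} shows the limit exists, and every pre-limit term $\Ee[M_n\one(T_b>n)]$ is nonnegative because $M_n\ge0$. Third, $O_b\ge0$: on $\{T_b<\infty\}$ we have $M_{T_b}\ge b$ by the definition of $T_b$, so $M_{T_b}-b\ge0$ pointwise there. These combine to give $b\,\Pp(T_b<\infty)\le(b+O_b)\Pp(T_b<\infty)=1-D_b-R_b\le1$.

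The one step needing care is when $\Pp(T_b<\infty)=0$. In that case the conditional expectation defining $O_b$ is undefined, but the claimed bound is trivial. So I would either treat this case separately or avoid conditioning altogether. The cleaner route is to work with the unconditioned mass balance \eqref{eq:mass-balance}:
\[
  b\,\Pp(T_b<\infty)\le\Ee[M_{T_b}\one(T_b<\infty)]=1-D_b-R_b\le1 .
\]
Here the first inequality uses $M_{T_b}\ge b$ on the crossing event, and the integrability hypothesis guarantees that every quantity is finite. Dividing by $b>1$ finishes the proof.
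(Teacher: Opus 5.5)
Your proof is correct and is essentially the paper's own argument: read the bound off \Cref{thm:general} by discarding the nonnegative quantities $D_b$ and $R_b$ from the numerator and $O_b$ from the denominator. Your preferred version through the unconditioned mass balance \eqref{eq:mass-balance}, $b\,\Pp(T_b<\infty)\le\Ee[M_{T_b}\one(T_b<\infty)]=1-D_b-R_b\le1$, is a small but real tidying, because it never uses the conditional expectation defining $O_b$. That conditional expectation is undefined when $\Pp(T_b<\infty)=0$, a case the paper's one-line proof passes over silently.
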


\begin{proof}
Now there are three nonnegative terms to discard: $1-D_b-R_b\le1$ and $b+O_b\ge b$.
\end{proof}

\begin{proposition}[Exact slack]
\label{prop:slack}
Under the hypotheses of \Cref{thm:general},
\begin{equation}
  \frac1b-\Pp(T_b<\infty)
  =\frac{O_b+b(D_b+R_b)}{b(b+O_b)}.
  \label{eq:slack}
\end{equation}
\end{proposition}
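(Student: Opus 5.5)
The plan is to substitute the general identity \eqref{eq:general-identity} of \Cref{thm:general} directly into the left side of \eqref{eq:slack} and put everything over a common denominator. No limiting argument is needed beyond what \Cref{thm:general} already supplies: under its hypotheses $O_b<\infty$, $R_b<\infty$, and $D_b\in[0,1]$. So every quantity is a finite real number and the manipulation is purely algebraic.

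Concretely, we would write
\[
  \frac1b-\Pp(T_b<\infty)=\frac1b-\frac{1-D_b-R_b}{b+O_b}
  =\frac{(b+O_b)-b(1-D_b-R_b)}{b(b+O_b)}.
\]
The numerator expands as $b+O_b-b+bD_b+bR_b=O_b+b(D_b+R_b)$, which is \eqref{eq:slack}. The denominator $b(b+O_b)$ is strictly positive because $b>1$ and $O_b\ge0$, so the division is legitimate.

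The only point deserving care, and the closest thing to an obstacle, is the degenerate case $\Pp(T_b<\infty)=0$. There $O_b$, being a conditional expectation on a null event, is not defined by \eqref{eq:overshoot}. We would adopt the convention, implicit in \eqref{eq:int-condition}, that $O_b$ is any finite nonnegative value, say $0$. In that case the mass balance \eqref{eq:mass-balance} forces $1-D_b-R_b=0$. The right side of \eqref{eq:slack} is then $(O_b+b)/(b(b+O_b))=1/b$, matching the left side for every choice of $O_b$. So the formula holds regardless of convention. We would also remark that each term of the numerator is nonnegative, which recovers \Cref{cor:ville-general} and exhibits the slack as a sum of an overshoot contribution $O_b/(b(b+O_b))$ and a mass-loss contribution $(D_b+R_b)/(b+O_b)$.
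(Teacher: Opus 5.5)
Your proof is correct and is exactly the paper's argument: substitute \eqref{eq:general-identity} into $1/b-\Pp(T_b<\infty)$ and simplify over the common denominator $b(b+O_b)$. Your extra check of the degenerate case $\Pp(T_b<\infty)=0$ is a valid point of care that the paper leaves implicit: there the mass balance forces $1-D_b-R_b=0$, and the formula then holds for any convention on the undefined $O_b$.
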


\begin{proof}
Subtract \eqref{eq:general-identity} from $1/b$ and put over a common denominator:
\[
  \frac1b-\frac{1-D_b-R_b}{b+O_b}
  =\frac{(b+O_b)-b(1-D_b-R_b)}{b(b+O_b)}
  =\frac{O_b+b(D_b+R_b)}{b(b+O_b)}.
\]
\end{proof}

\begin{remark}[The two sources of slack, correctly named]
\label{rem:sources-slack}
The first pass suggested that slack comes from ``overshoot and supermartingale loss.'' Formula \eqref{eq:slack} shows the right picture once the residual is included. The loss $D_b$ and the residual $R_b$ enter together -- each multiplied by $b$ in the numerator -- while the overshoot $O_b$ appears both in the numerator (weight $1$) and in the denominator. The genuine dichotomy is between denominator inflation ($O_b$, a geometric effect of discrete jumps) and numerator leakage ($D_b+R_b$), where the leakage is simply all the mass that fails to arrive at the crossing site:
\[
  D_b+R_b=1-\Ee[M_{T_b}\one(T_b<\infty)]
  =1-(b+O_b)\Pp(T_b<\infty).
\]
Supermartingale loss and survival residual are two channels of one conserved phenomenon, not two unrelated ones -- which is exactly what the conservation law of \Cref{sec:conservation} asserts, and what \Cref{prop:three-causes} organizes by cause. (We resist reading the apparent ``weight $b$ versus weight $1$'' as a deep asymmetry: $O_b$ sits in the denominator as well, so the weights are partly an artifact of position in the ratio. The robust statement is leakage versus inflation.)
\end{remark}

\begin{proposition}[Exact tightness]
\label{prop:tightness}
$\Pp(T_b<\infty)=1/b$ if and only if $O_b=0$ and $D_b+R_b=0$; by nonnegativity this forces $O_b=D_b=R_b=0$. That is, Ville is tight precisely for a martingale that lands on $b$ with no overshoot and leaves no surviving mass.
\end{proposition}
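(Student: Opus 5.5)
The plan is to read tightness directly off the exact slack formula \eqref{eq:slack} of \Cref{prop:slack}, under the standing hypotheses of \Cref{thm:general}. These hypotheses give $O_b<\infty$, so the denominator $b(b+O_b)$ is finite. It is also strictly positive, since $b>1$ and $O_b\ge0$. Hence $\Pp(T_b<\infty)=1/b$ holds if and only if the numerator vanishes:
\[
O_b+b(D_b+R_b)=0 .
\]

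\emph{Converse direction.} If $O_b=0$ and $D_b+R_b=0$, the numerator is zero and the slack is zero, so Ville is attained. Equivalently, one can substitute directly into \eqref{eq:general-identity} to get $(1-0)/(b+0)=1/b$.

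\emph{Forward direction.} The numerator is a sum of two nonnegative terms:
\begin{itemize}
\item $O_b\ge0$, because $M_{T_b}\ge b$ on $\{T_b<\infty\}$;
\item $b(D_b+R_b)\ge0$, because $b>1$, $D_b\ge0$ (nonnegative decrements) and $R_b\ge0$ (from \Cref{thm:general}).
\end{itemize}
A sum of nonnegative reals vanishes only if each summand does, so $O_b=0$ and $D_b+R_b=0$. Applying the same argument to $D_b$ and $R_b$ gives $D_b=R_b=0$.

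\emph{The verbal gloss.} $D_b=0$ says only that $\Ee[\delta_i\one(T_b\ge i)]=0$ for all $i$, i.e.\ the stopped process $M_{n\wedge T_b}$ is a martingale. I will phrase ``a martingale'' in that sense and not claim that $(M_n)$ itself is a martingale after $T_b$. The conditions $O_b=0$ and $R_b=0$ translate directly to landing on $b$ exactly and leaving no surviving mass.

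\emph{Main obstacle.} The only delicate point is the degenerate case $\Pp(T_b<\infty)=0$, where $O_b$ is a conditional expectation on a null event. There I will use the convention $O_b=0$, or simply observe that tightness $1/b>0$ excludes this case. Beyond that, the argument is bookkeeping on \eqref{eq:slack}.
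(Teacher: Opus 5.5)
Your proposal is correct and matches the paper's proof: both read tightness off the exact slack formula \eqref{eq:slack}, and both note that its numerator is a sum of nonnegative terms, so it vanishes exactly when $O_b=0$ and $D_b=R_b=0$. Your additional points are sound refinements of the paper's one-line argument and its verbal gloss: the denominator is finite and positive, tightness rules out the null-crossing case, and $D_b=0$ only makes the stopped process a martingale.
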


\begin{proof}
The right side of \eqref{eq:slack} is zero if and only if its numerator is, i.e. if and only if $O_b=0$ and $D_b+R_b=0$. Since all three terms are nonnegative, $D_b+R_b=0$ forces $D_b=R_b=0$.
\end{proof}

\begin{remark}[The continuous-time case: Ville is exactly tight]
\label{rem:continuous-time}
\Cref{prop:tightness} explains a familiar fact and locates the discrete-time conservatism precisely. For a martingale with continuous paths, path-continuity kills the overshoot -- the process reaches $b$ continuously, so $M_{T_b}=b$ and $O_b=0$ -- and the martingale property kills the loss, $D_b=0$. Only the residual can remain, and the identity collapses to
\[
  \Pp(T_b<\infty)=\frac{1-R_b}{b},
  \qquad
  R_b=\Ee[M_\infty\one(T_b=\infty)].
\]
Thus Ville is attained with equality if and only if $M_\infty=0$ almost surely on the event $\{\text{never reach }b\}$, the content of Doob's maximal identity $M_0/\sup_t M_t\sim\mathrm{Unif}(0,1)$ \cite{nikeghbali2008generalization}. The canonical e-process $M_t=\exp(\lambda B_t-\lambda^2t/2)$ has $M_\infty=0$ almost surely, so $R_b=0$ and $\Pp(\sup_t M_t\ge b)=1/b$ exactly -- indeed $\sup_t(\lambda B_t-\lambda^2t/2)$ is $\operatorname{Exp}(1)$, giving $\Pp(\sup_t M_t\ge b)=e^{-\log b}=1/b$. Read this way, every discrete e-process is a time-discretization of the continuous ideal, and the conservatism it suffers is, term by term, the correction of \Cref{thm:general}: discretization reintroduces $O_b>0$ (the process can jump over the boundary), while plug-in or mixture construction reintroduces $D_b>0$. The identity therefore answers, with an exact formula, the tightness question raised by Koolen, P\'erez-Ortiz and Lardy \cite{koolen2026generalisation}, who exhibit one supermartingale attaining Ville's bound: the identity gives the slack of every one.
\end{remark}

\section{A Structural Theorem for the Overshoot}
\label{sec:overshoot}

Computing $O_b$ example by example is unnecessary when the value at which the process lands is controlled uniformly. The following lifts \cite[Thm.~4.2]{klass2026beyond} into the present language. Its importance is not merely computational convenience: it identifies the families in which $O_b$ is an exact constant rather than an asymptotic approximation, and these are exactly the families in which the entire identity becomes a closed-form arithmetic statement. This is the decisive answer to any objection that overshoot is a negligible, second-order effect best ignored -- in these families it is neither negligible nor an approximation, but a number one writes down.

\begin{theorem}[Constant-overshoot identity]
\label{thm:constant-overshoot}
Suppose there is a constant $\omega\ge0$ such that, on $\{T_b<\infty\}$, the conditional landing value obeys
\begin{equation}
  \Ee[M_{T_b}\mid \cF_{T_b-1},T_b=n]\le b+\omega
  \quad\text{a.s., for every }n.
  \label{eq:landing-bound}
\end{equation}
Then $O_b\le\omega$ and $\Pp(T_b<\infty)\ge(1-D_b-R_b)/(b+\omega)$, with equality throughout when \eqref{eq:landing-bound} holds with equality. In particular, if the conditional overshoot is a level-independent constant $\omega$, then $O_b=\omega$ and $\Pp(T_b<\infty)=(1-D_b-R_b)/(b+\omega)$.
\end{theorem}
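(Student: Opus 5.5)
The plan is to integrate the conditional landing bound \eqref{eq:landing-bound} over the crossing event and then substitute into \Cref{thm:general}. Write the crossing event as the disjoint union $\{T_b<\infty\}=\bigcup_{n\ge1}\{T_b=n\}$. For each fixed $n$, the event $\{T_b=n\}$ is $\cF_n$-measurable but not $\cF_{n-1}$-measurable. So I will read the conditioning in \eqref{eq:landing-bound} as conditioning on $\cF_{n-1}$ restricted to $\{T_b=n\}$: the quantity $\Ee[M_n\one(T_b=n)\mid\cF_{n-1}]$ divided by $\Pp(T_b=n\mid\cF_{n-1})$ on the set where the latter is positive.

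Under that reading, the hypothesis gives, for each $n$,
\[
  \Ee[M_n\one(T_b=n)\mid\cF_{n-1}]\le (b+\omega)\,\Pp(T_b=n\mid\cF_{n-1})\quad\text{a.s.}
\]
Taking expectations yields $\Ee[M_{T_b}\one(T_b=n)]\le(b+\omega)\Pp(T_b=n)$. Summing over $n$ is justified by monotone convergence, since everything is nonnegative. This gives
\[
  \Ee[M_{T_b}\one(T_b<\infty)]\le(b+\omega)\Pp(T_b<\infty).
\]
A by-product is that the integrability condition \eqref{eq:int-condition} holds automatically, so \Cref{thm:general} applies without further assumption. By Step 4 of \Cref{thm:absorbing}, the left side equals $(b+O_b)\Pp(T_b<\infty)$. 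Hence $O_b\le\omega$ whenever $\Pp(T_b<\infty)>0$; if that probability is zero, $O_b$ is vacuous or can be taken as $0$, and the claim is trivial.

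Next I feed this into the mass balance \eqref{eq:mass-balance}. Since $1-D_b-R_b=\Ee[M_{T_b}\one(T_b<\infty)]\le(b+\omega)\Pp(T_b<\infty)$, dividing by $b+\omega>0$ gives the stated lower bound $\Pp(T_b<\infty)\ge(1-D_b-R_b)/(b+\omega)$. Equivalently, the exact identity \eqref{eq:general-identity} holds with denominator $b+O_b\le b+\omega$, which gives the same bound. If \eqref{eq:landing-bound} holds with equality, every inequality above becomes an equality. Then $(b+O_b)\Pp(T_b<\infty)=(b+\omega)\Pp(T_b<\infty)$, so $O_b=\omega$ when crossing has positive probability. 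The formula $\Pp(T_b<\infty)=(1-D_b-R_b)/(b+\omega)$ follows directly from \Cref{thm:general}. The level-independent constant case is a special instance of the equality case.

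The main obstacle is the measure-theoretic meaning of conditioning on $\cF_{T_b-1}$ jointly with $\{T_b=n\}$. The event $\{T_b=n\}$ is not in $\cF_{n-1}$, so the tower step must be done carefully. My first attempt will be to define the conditional expectation as the ratio above and verify that the tower identity
\[
  \Ee\bigl[\Ee[M_n\one(T_b=n)\mid\cF_{n-1}]\bigr]=\Ee[M_n\one(T_b=n)]
\]
is all that is needed. The null set where $\Pp(T_b=n\mid\cF_{n-1})=0$ contributes nothing to either side, since there $M_n\one(T_b=n)$ has zero conditional mean. Everything else is bookkeeping that \Cref{thm:general} has already done.
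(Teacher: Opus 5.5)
Your proposal is correct and follows essentially the paper's route. You integrate the landing bound over each $\{T_b=n\}$ and sum to get $\Ee[M_{T_b}\one(T_b<\infty)]\le(b+\omega)\Pp(T_b<\infty)$, then feed this into the mass balance \eqref{eq:mass-balance}. Your explicit reading of the hypothesis as $\Ee[M_n\one(T_b=n)\mid\cF_{n-1}]\le(b+\omega)\Pp(T_b=n\mid\cF_{n-1})$, and your direct handling of the equality case, simply spell out what the paper compresses into ``conditioning first on the value of $T_b$'' and a citation to the companion paper.
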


\begin{proof}
Integrating \eqref{eq:landing-bound} over $\{T_b<\infty\}$, conditioning first on the value of $T_b$, gives $\Ee[M_{T_b}\one(T_b<\infty)]\le(b+\omega)\Pp(T_b<\infty)$, i.e. $O_b\le\omega$. Substitute into \eqref{eq:general-identity}, recalling that this expectation equals $1-D_b-R_b$ by \eqref{eq:mass-balance}; the inequality reverses in the denominator, giving the stated lower bound for the probability. Equality propagates exactly as in \cite[Thm.~4.2]{klass2026beyond}.
\end{proof}

The companion paper exhibits three families of additive martingales in which this constant exists by construction \cite[\S~2.3]{klass2026beyond}, yielding immediate corollaries. In each family the conditional overshoot of every level is the same constant, which is what makes $O_b$ exact. We state the conventions explicitly, since the value of $\omega$ depends on how the increment law is parametrized.

\begin{corollary}[Constant-overshoot families]
\label{cor:constant-overshoot-families}
Consider the first passage of an additive mean-zero walk above an integer level.
\begin{enumerate}[label=(\roman*)]
\item \emph{Bounded integer up-steps (C1).} If the increments take integer values with maximal up-step $+1$, the walk lands exactly one unit above the level, so $\omega=1$. (For any such $X$, $\Ee[X\mid X>0]=1$.)
\item \emph{Geometric positive increments (C(2,$\cdot$)).} If the positive increment has the geometric law on $\{1,2,\ldots\}$ with success probability $q\in(0,1)$, namely $\Pp(X=k)=q(1-q)^{k-1}$, then by the memorylessness of the geometric the conditional overshoot is again geometric with the same law, and $\omega=1/q$. (Equivalently, the companion's parametrization gives $\Ee[X\mid X>k]=k+p/(1-p)$ in the decay-ratio convention; see \Cref{rem:geometric-param}.)
\item \emph{Exponential right tail (C(3,$a$)).} If the positive increment has an exponential right tail, $\Pp(X>y)=c e^{-ay}$, then by the memorylessness of the exponential the overshoot is exponential with mean $\omega=1/a$, and $\Ee[X\mid X>y]=y+1/a$ (the constant $c$ is irrelevant).
\end{enumerate}
\end{corollary}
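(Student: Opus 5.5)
The plan is to reduce all three items to one computation, the conditional overshoot at the crossing step. We will then feed that constant into \Cref{thm:constant-overshoot} with equality in \eqref{eq:landing-bound}.

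Fix an integer level $r$ and let the walk be $S_n=S_{n-1}+X_n$, with i.i.d.\ mean-zero increments $X_n$. Crossing at step $n$ means $S_{n-1}\le r$ and $S_{n-1}+X_n>r$. Set $y=r-S_{n-1}\ge0$, which is $\cF_{n-1}$-measurable. Since $X_n$ is independent of $\cF_{n-1}$,
\[
\Ee[S_n-r\mid\cF_{n-1},T=n]=\Ee[X-y\mid X>y],
\]
evaluated at $y=r-S_{n-1}$. So in each family the task is to show that $\Ee[X-y\mid X>y]$ is one constant $\omega$ for every admissible $y$. Admissible means $y\ge0$, and $y$ is an integer in the lattice cases. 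That constant is then the exact landing bound, with equality, and \Cref{thm:constant-overshoot} gives $O_b=\omega$.

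The three cases are as follows.
\begin{enumerate}[label=(\roman*)]
\item For integer increments with maximal up-step $+1$, the walk starts at or below an integer level $r$. It can exceed $r$ only by moving from $r$ to $r+1$. Hence $y=0$, $X=1$, and the overshoot is identically $1$, giving $\omega=1$.
\item For geometric positive increments, take $X>y$ with $y$ a nonnegative integer. Memorylessness gives
\[
\Pp(X=y+k\mid X>y)=\frac{q(1-q)^{y+k-1}}{(1-q)^y}=q(1-q)^{k-1}.
\]
So $X-y$ given $X>y$ is again geometric with parameter $q$, and $\omega=1/q$. We must be careful here: the positive part of the increment law need only be proportional to this geometric on $\{1,2,\dots\}$. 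The normalizing mass cancels in the conditional, and mean zero is arranged through the negative part, which plays no role in the conditioning.
\item For an exponential right tail, $\Pp(X>y+t\mid X>y)=ce^{-a(y+t)}/(ce^{-ay})=e^{-at}$ for all $y\ge0$, and $c$ cancels. The overshoot is therefore exponential with mean $1/a$. For the stated identity $\Ee[X\mid X>y]=y+1/a$, note that it only needs the tail formula on $y\ge0$, which is the only range used.
\end{enumerate}

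The main obstacle is transporting these additive statements into the multiplicative setting of \Cref{thm:constant-overshoot}, which is phrased for $M_{T_b}$ relative to $b$. Our approach is to read the corollary as applying that theorem's argument verbatim to the additive walk. The proof of \Cref{thm:constant-overshoot} uses only the integrated landing bound, so this is legitimate.

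A second, smaller issue is the conditioning in \eqref{eq:landing-bound}. It conditions on $\cF_{T_b-1}$ together with the event $\{T=n\}$. We need that, on $\{T\ge n\}\in\cF_{n-1}$, conditioning on $T=n$ is the same as conditioning on $X_n>y$ given $\cF_{n-1}$. This holds by independence of $X_n$ from $\cF_{n-1}$, and it is what makes the constant exact rather than merely a bound. Finally, we will cross-check the companion's decay-ratio parametrization in (ii): writing $1-q=p$ gives $1/q=1+p/(1-p)$, which matches $\Ee[X\mid X>k]-k$ once the convention of \Cref{rem:geometric-param} is fixed.
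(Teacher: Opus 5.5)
Your proposal is correct and takes the same route as the paper, whose justification is exactly the crossing-step computation of the conditional overshoot (the forced unit up-step in (i), memorylessness of the geometric and exponential tails in (ii)--(iii)) fed into \Cref{thm:constant-overshoot}. You also make explicit what the paper leaves implicit: the reduction to $\Ee[X-y\mid X>y]$ by independence, the fact that only the positive part of the law is geometric, and the integer-lattice requirement on $y$ in (ii), which is genuinely needed because geometric memorylessness fails at non-integer shifts. One small correction to your closing cross-check: $1/q=1+p/(1-p)$ shows that the companion's $p/(1-p)$ is $\omega-1$, the excess beyond the first lattice point above $k$ (as \Cref{rem:geometric-param} says), whereas $\Ee[X\mid X>k]-k$ itself equals $1/(1-p)$.
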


\begin{remark}[A parametrization caution for the geometric family]
\label{rem:geometric-param}
Case (ii) is sensitive to how the geometric is written, and it is worth stating the conversion to avoid a common slip -- one the companion paper \cite{klass2026beyond} states in its own (decay-ratio) convention, so the two papers must be read consistently. If one parametrizes by the decay ratio $p$, writing $\Pp(X=k)\propto p^k$ on $\{1,2,\ldots\}$, then after normalization $\Pp(X=k)=(1-p)p^{k-1}$, so the success probability is $q=1-p$ and the mean overshoot is
\[
  \omega=\frac1q=\frac1{1-p},
\]
not $1/p$. The two expressions agree only at $p=1/2$. The form $\omega=1/p$ would be correct only under the success-probability convention $\Pp(X=k)=p(1-p)^{k-1}$. Either convention is fine; what matters is to state it. We report $\omega$ in terms of the success probability throughout; the companion paper reports the conditional mean $\Ee[X\mid X>k]=k+p/(1-p)$ in the decay-ratio convention. These are the same family, and $p/(1-p)$ in the decay-ratio convention equals $(1-q)/q=\omega-1$ in the success-probability convention, consistent with an overshoot that is $1$ plus a geometric excess.
\end{remark}

\begin{remark}[Additive versus multiplicative: a caution]
\label{rem:additive-vs-multiplicative}
These constants are computed for an additive walk crossing a fixed level; they do not transfer for free to the multiplicative identity via $M_n=e^{\lambda S_n}$. For a mean-zero increment the moment generating function $\varphi(\lambda)=\Ee[e^{\lambda X}]$ is convex with $\varphi(0)=1$ and $\varphi'(0)=0$, so $\varphi(\lambda)>1$ for every $\lambda\ne0$, and no nonzero tilt makes $e^{\lambda S_n}$ a martingale. The genuine martingale carries a time normalizer, $M_n=e^{\lambda S_n}/\varphi(\lambda)^n$ (\Cref{ex:rw-lr}), which turns the fixed barrier into one rising linearly in $n$. A constant additive overshoot therefore produces a time- and level-dependent multiplicative overshoot; treating these families multiplicatively is a separate computation.
\end{remark}

\section{Principles for Computing the Three Terms}
\label{sec:principles}

The following rules of thumb, used throughout \Cref{sec:examples}, follow directly from the definitions, from \Cref{thm:general}, and from the dichotomy of \Cref{prop:three-causes}.

\begin{principle}[Overshoot]
If $M_n$ lives on a lattice and $b$ is exactly achievable, then $O_b=0$. If $|M_n-M_{n-1}|\le c$, then $O_b\le c$; if $M_n/M_{n-1}\le r$ and $M_{T_b-1}<b$, then $O_b<b(r-1)$. In a constant-overshoot family (\Cref{cor:constant-overshoot-families}), $O_b=\omega$ exactly.
\end{principle}

\begin{principle}[Loss]
If $(M_n)$ is a martingale then $D_b=0$. If crossing or absorption is resolved at the first step, then $D_b=\Ee[\delta_1]$. If $\delta_n=\varepsilon M_{n-1}$ (proportional decay), then
\[
  D_b=\varepsilon\sum_{i\ge1}\Ee[M_{i-1}\one(T_b\ge i)].
\]
\end{principle}

\begin{principle}[Residual]
If \Cref{ass:absorbing} holds, $R_b=0$. If the wobbling and bounded-overshoot conditions of \Cref{thm:wobbling} hold, $R_b=0$. More generally $R_b=\Ee[M_\infty\one(T_b=\infty)]$ whenever $M_n\one(T_b>n)$ converges in $L^1$, where $M_\infty=\lim_n M_n$ on $\{T_b=\infty\}$.
\end{principle}

\section{Examples}
\label{sec:examples}

We group the examples to underline the structure. The first several are absorbing ($R_b=0$), so the first-pass identity \eqref{eq:absorbing-identity} suffices; \Cref{ex:never-crosses} is non-absorbing and requires the general identity \eqref{eq:general-identity}.

\begin{example}[Biased double or absorb -- a supermartingale]
\label{ex:biased-double}
At each step the process either doubles, $M_n=2M_{n-1}$, with probability $1/4$, or is killed, $M_n=0$, with probability $3/4$. Then $\Ee[M_n\mid M_{n-1}]=\frac14\cdot2M_{n-1}=\frac12M_{n-1}$, so it is a supermartingale with decrement $\delta_n=\frac12M_{n-1}$, and it is absorbing. For $b=2$ the process must double on step $1$: $\Pp(T_2<\infty)=1/4$, $D_2=\Ee[\frac12M_0]=1/2$, $O_2=0$, and \eqref{eq:absorbing-identity} gives $(1-1/2)/2=1/4$ -- here the slack below Ville's $1/2$ is pure supermartingale loss. For $b=4$ the process must double twice: $\Pp(T_4<\infty)=1/16$, while
\[
  D_4=\Ee\left[\frac12M_0\right]
      +\Ee\left[\frac12M_1\one(T_4\ge2)\right]
      =\frac12+\frac12\cdot2\cdot\frac14
      =\frac34,
\]
$O_4=0$, and $(1-3/4)/4=1/16$. This process returns in \Cref{sec:evalues}.
\end{example}

\begin{example}[Constant multiplicative decay]
\label{ex:multiplicative-decay}
With probability $p$ multiply by $u>1$, with probability $1-p$ kill, where $pu<1$. Then $\delta_n=(1-pu)M_{n-1}$, and for $b=u^m$ the process must multiply $m$ times: $\Pp(T_b<\infty)=p^m$, $O_b=0$, $R_b=0$, and
\[
  D_b=(1-pu)\sum_{i=1}^m(pu)^{i-1}=1-(pu)^m,
\]
so $(1-D_b)/b=(pu)^m/u^m=p^m$. With $p=1/4$, $u=2$, $b=4$ one gets $D_4=3/4$ and $\Pp=1/16$, matching \Cref{ex:biased-double}.
\end{example}

\begin{example}[Random-walk likelihood ratio -- martingale]
\label{ex:rw-lr}
With $X_i=\pm1$ fair and $S_n=\sum_{i\le n}X_i$, the process $r^{S_n}$ is a martingale only for $r=1$; otherwise normalize to
\[
  M_n=\left(\frac{2}{r+r^{-1}}\right)^n r^{S_n},
\]
a martingale with $M_0=1$. For $r=2$ this is $(4/5)^n2^{S_n}$. Crossing $M_n\ge b$ means
\[
  S_n\ge\frac{\log b+n\log(5/4)}{\log2},
\]
a barrier that rises with $n$. Being a martingale, $D_b=0$; since $(4/5)^n2^{S_n}\to0$ almost surely off the crossing event, the process is absorbing and $R_b=0$, so the slack is entirely overshoot. This is the multiplicative situation of \Cref{rem:additive-vs-multiplicative}.
\end{example}

\begin{example}[Gambler's ruin -- exact tightness]
\label{ex:gamblers-ruin}
Let $S_n$ be a simple symmetric walk with $S_0=a$, $0<a<N$, and set $M_n=S_{n\wedge T}/a$ for $T=T_0\wedge T_N$. With $M_0=1$ and $b=N/a$, the classical ruin probability gives $\Pp(T_b<\infty)=a/N=1/b$. Here $O_b=0$ (the integer lattice hits $N/a$ exactly), $D_b=0$ (martingale), and $R_b=0$ (non-crossing paths hit $0$), so Ville is tight by \Cref{prop:tightness}. By the dichotomy of \Cref{prop:three-causes}, all three dials are off.
\end{example}

\begin{example}[A bounded martingale that never crosses -- $R_b>0$]
\label{ex:never-crosses}
Let $U\sim\operatorname{Unif}(0,2)$, so $\Ee U=1$, and let $M_n=\Ee[U\mid\cF_n]$ be the L\'evy martingale of $U$ for a filtration that reveals the binary digits of $U$. By the martingale convergence theorem \cite{doob1953stochastic}, $M_n\to U\in(0,2)$ almost surely and in $L^1$; here $M_0=1$, and the process never reaches $b=3$. Thus $\Pp(T_3<\infty)=0$, while Ville promises only $\le1/3$.

The absorbing identity \eqref{eq:absorbing-identity} cannot be used: \Cref{ass:absorbing} fails, because non-crossing paths converge to $U>0$, not to $0$. Equivalently, the wobbling condition (i) of \Cref{thm:wobbling} fails -- the martingale settles rather than wobbling out -- which by \Cref{prop:three-causes}(c) is exactly the regime in which $R_b>0$. The general identity gives the exact answer. Being a martingale, $D_3=0$; by \Cref{sec:principles}, $R_3=\Ee[U\one(T_3=\infty)]=\Ee[U]=1$ (the event $\{T_3=\infty\}$ has probability one); the overshoot is vacuous. The mass balance \eqref{eq:mass-balance} reads $\Ee[M_{T_3}\one(T_3<\infty)]=0=1-0-1$, and \eqref{eq:general-identity} returns $0/(3+O_3)=0$, correctly. The entire unit of mass survives as residual. This is the example that distinguishes the two passes.
\end{example}

\begin{example}[First passage to a positive value]
\label{ex:first-positive}
For a simple symmetric walk $S_n$ and $T=\inf\{n:S_n>0\}$ we have $S_T=1$ almost surely (overshoot $\omega=1$, case (i) of \Cref{cor:constant-overshoot-families}), with $\Pp(T<\infty)=1$ but $\Ee[T]=\infty$. The companion paper computes the exact tail rate $\lim_n \Pp(T>n)\Ee[S_n^2\one(T>n)]=4/\pi$ -- a martingale that crosses almost surely yet with a heavy-tailed stopping time, the behavior governed by \Cref{thm:wobbling}. We return to this rate in \Cref{sec:tail-rate}, where we are careful to distinguish the nonnegative process $M$ of the identity from the centered walk $S$ to which the tail rate applies.
\end{example}

\section{Extension to E-Values and Anytime-Valid Inference}
\label{sec:evalues}

We close with the application that motivates much of the recent interest in Ville's inequality. The identity gives the exact size of a sequential test and shows how to make such tests less wasteful, together with the confidence-sequence dual and an explicit account of the price one pays for the sharpening.

\subsection{E-values, e-processes, and the bridge $b=1/\alpha$}

Fix a null hypothesis $H_0$. An e-variable is a nonnegative random variable $E$ with $\Ee_P[E]\le1$ for every $P\in H_0$; its realized value, an e-value, is a continuous measure of evidence against $H_0$, with $1/E$ behaving like a conservative p-value \cite{vovk2021evalues}. Sequentially, one forms a nonnegative process $(E_n)$ with $E_0=1$; it is a test supermartingale (respectively test martingale) under $H_0$ if $\Ee_P[E_n\mid\cF_{n-1}]\le E_{n-1}$ (respectively with equality) for every $P\in H_0$. Such a process is exactly a nonnegative supermartingale with $E_0=1$, and the running product of conditional e-variables produces one. This is the object of \Cref{thm:general} with $M_n=E_n$.

The bridge is to set the level at $b=1/\alpha$. Then ``reject $H_0$ the first time $E_n\ge1/\alpha$'' is an anytime-valid level-$\alpha$ test: its rejection time is the crossing time $T_b$, and by \Cref{cor:ville-general} its type-I error is
\[
  \Pp(\exists n:E_n\ge1/\alpha\mid H_0)
  =\Pp(T_b<\infty)\le\frac1b=\alpha,
\]
holding simultaneously at all stopping times. So the type-I error is exactly the crossing probability we have computed.

\subsection{The exact size, and the three sources of conservatism}

Substituting $b=1/\alpha$ into \eqref{eq:general-identity}, the exact type-I error is
\begin{equation}
  \Pp(\text{ever reject}\mid H_0)
  =\frac{1-D_{1/\alpha}-R_{1/\alpha}}{1/\alpha+O_{1/\alpha}}
  \le \alpha,
  \label{eq:exact-size}
\end{equation}
with the three terms evaluated under the null. Each carries a statistical meaning, and -- by the dichotomy of \Cref{prop:three-causes} -- each is switched on by an identifiable feature of the test.

\begin{itemize}[leftmargin=*]
\item The overshoot $O_{1/\alpha}$ is wasted evidence at rejection: on rejection the accumulated evidence is $1/\alpha+O_{1/\alpha}$, though only $1/\alpha$ was needed. It is switched on by discreteness of the increments (\Cref{prop:three-causes}(b)), and in the constant-overshoot families of \Cref{cor:constant-overshoot-families} it is known exactly, not merely bounded.
\item The loss $D_{1/\alpha}$ is the cost of a supermartingale over a martingale: a test martingale spends its evidence losslessly, whereas the strict supermartingales produced by mixture and plug-in constructions leak expected evidence, and $D_{1/\alpha}$ is that leak measured along pre-rejection paths. It is switched on by the strict-supermartingale gap (\Cref{prop:three-causes}(a)).
\item The residual $R_{1/\alpha}$ is null mass that never rejects: the part of the budget that stays bounded below $1/\alpha$ forever (zero for a process that decays under $H_0$, positive for one that settles at a positive level, as in \Cref{ex:never-crosses}). It is switched on by failure of wobbling (\Cref{prop:three-causes}(c)).
\end{itemize}

The significance the test fails to use is given exactly by \Cref{prop:slack}:
\begin{equation}
  \alpha-\Pp(\text{ever reject}\mid H_0)
  =\frac{O_{1/\alpha}+\frac1\alpha\bigl(D_{1/\alpha}+R_{1/\alpha}\bigr)}{\frac1\alpha+O_{1/\alpha}}.
  \label{eq:evalue-slack}
\end{equation}
The designer can read off which channel is bleeding. We note that the available optimality theory for e-processes -- log-optimal and growth-rate-optimal constructions, admissibility, and rate-optimal boundaries \cite{larsson2025numeraire,ramdas2020admissible,agrawal2025stopping} -- sharpens the alternative side, that is, how fast the process grows under $H_1$. Formula \eqref{eq:evalue-slack} is about the null side: it decomposes the gap between the realized type-I error and the nominal $\alpha$, an axis those results leave untouched.

\subsection{Recalibrating the threshold}

Because \eqref{eq:exact-size} is exact, one can spend the whole budget rather than leave it on the table. Replace the default threshold $1/\alpha$ by the smallest reachable threshold whose exact error still respects the level,
\begin{equation}
  b^\star=
  \min\left\{\text{reachable }b>1:
  \frac{1-D_b-R_b}{b+O_b}\le\alpha\right\}
  \le\frac1\alpha,
  \label{eq:bstar}
\end{equation}
with strict inequality whenever any correction term is positive. We take the smallest reachable value rather than the raw infimum on purpose: on a lattice the open infimum can land on a jump point where the exact error exceeds $\alpha$, so one chooses the safe side of the lattice, the smallest attainable level whose exact size is $\le\alpha$.

Rejecting at $E_n\ge b^\star$ keeps exact anytime-valid level $\alpha$: the rejection event is $\{\sup_n E_n\ge b^\star\}$, which has probability $\le\alpha$ by the very construction of \eqref{eq:bstar}, and crossing a fixed level is the cleanest anytime-valid rule, valid at all stopping times. The decision is deterministic, in contrast to the randomized route to power recovery \cite{ramdasmanole2026randomized}.

\paragraph{Power, by set inclusion.}
Since $b^\star\le1/\alpha$, we have $\{E_n\ge1/\alpha\}\subseteq\{E_n\ge b^\star\}$ for every $n$, hence $T_{b^\star}\le T_{1/\alpha}$ pathwise under any alternative. The recalibrated test therefore rejects no later than the default one on every path: weakly higher power, weakly smaller sample size. (One should not argue that ``the process must reach $b^\star$ before $1/\alpha$,'' which silently assumes no upward jumps; set inclusion needs nothing of the sort.)

\begin{example}[Recalibration on the biased-double test]
\label{ex:recalibration}
Take the test supermartingale of \Cref{ex:biased-double} (probability $1/4$ double, $3/4$ kill) and target $\alpha=1/4$. The default threshold is $b=1/\alpha=4$, whose exact size is $\Pp(T_4<\infty)=1/16$ -- four times below the nominal level. The wasted significance is $1/4-1/16=3/16$, all of it from $D_4=3/4$ since $O_4=R_4=0$, matching \eqref{eq:evalue-slack}:
\[
  \frac{0+4\cdot(3/4)}{4\cdot4}=\frac3{16}.
\]
Solving \eqref{eq:bstar} gives $b^\star=2$, since $\Pp(T_2<\infty)=1/4=\alpha$ (\Cref{ex:biased-double}). One may therefore reject at $E_n\ge2$ -- the first doubling -- instead of $E_n\ge4$: the rejection region grows, power increases, and the exact type-I error remains exactly $\alpha$. Under an alternative with doubling probability $\rho$, the power rises from $\rho^2$ (two doublings needed to reach $4$) to $\rho$ (one doubling reaches $2$); e.g. $\rho=0.4$ gives $0.16\to0.40$ and $\rho=0.6$ gives $0.36\to0.60$. The identity converts an invisible factor-of-four conservatism into a concrete, recoverable gain.
\end{example}

\paragraph{The binding caveat, quantified.}
The terms $D_b,O_b,R_b$ are computed under a specific null law, whereas Ville's $1/\alpha$ is distribution-free, uniform over all nonnegative supermartingales. For a composite null the recalibration can violate the level if it is mis-targeted. Concretely, let $H_0=\{\text{doubling probability }q_0\in[1/4,1/2]\}$ in the double-or-kill family. The threshold $b^\star=2$ calibrated at $q_0=1/4$ has exact size $1/4=\alpha$ there, but at the least-favorable member $q_0=1/2$ its size is $1/2=2\alpha$ -- a level violation. The default $b=4$ stays valid for every $q_0\le1/2$ precisely because Ville's bound is uniform. The honest recipe for composite nulls is therefore to recalibrate at the least-favorable member,
\[
  b^\star=\min\left\{\text{reachable }b>1:
  \sup_{P\in H_0}\Pp_P(T_b<\infty)\le\alpha\right\},
\]
under which the gain shrinks and can vanish. Saying this plainly is the credibility move: the identity does not contradict Ville's universality, it prices it.

\paragraph{Spending exactly the knowledge one has.}
A practitioner who knows $D_b$ and $R_b$ but not the overshoot can still recalibrate safely. Since $(1-D_b-R_b)/(b+O_b)\le(1-D_b-R_b)/b$, choosing the smallest reachable $b'$ with $(1-D_b-R_b)/b'\le\alpha$ yields $b^\star\le b'\le1/\alpha$ -- a valid, less aggressive gain that uses only the numerator information. For a decaying test martingale ($D_b=R_b=0$) this returns $b'=1/\alpha$, no gain, consistent with Ville being tight there (\Cref{prop:tightness}). The feasibility of the full recalibration is exactly the feasibility of computing the three terms under the null -- the parametric or simple-null case, and the constant-overshoot families of \Cref{cor:constant-overshoot-families}, where $O_b$ is closed-form. This program of spending null structure to lower the threshold is shared with recent work \cite{blierwongwang2026thresholds} and with overshoot-refund methods for online error control \cite{kuang2026score}; it sits on a different axis from results showing that anytime validity can be obtained from fixed-sample tests at no cost \cite{koning2026anytime}, which concern optional continuation rather than null-law recalibration.

\subsection{The dual: narrower confidence sequences}

Inverting the recalibrated tests sharpens confidence sequences wherever the null law is known. Let $M_n(\theta_0)$ be a test supermartingale for the simple null $\{\theta=\theta_0\}$, so that the default $(1-\alpha)$ confidence sequence is $C_n=\{\theta_0:\sup_{k\le n}M_k(\theta_0)<1/\alpha\}$. Replacing the common threshold $1/\alpha$ by the per-parameter recalibrated level $b^\star(\theta_0)\le1/\alpha$ of \eqref{eq:bstar} gives
\[
  C_n^\star=
  \left\{\theta_0:\sup_{k\le n}M_k(\theta_0)<b^\star(\theta_0)\right\}
  \subseteq C_n,
\]
since each acceptance set shrinks. Coverage holds exactly, parameter by parameter: for the true $\theta$,
\[
  \Pp_\theta(\exists n:\theta\notin C_n^\star)
  =\Pp_\theta(T_{b^\star(\theta)}<\infty)\le\alpha
\]
by construction. Because confidence-sequence coverage is required pointwise in $\theta$, a per-$\theta$ threshold is exactly the right object, and the result is the same coverage with a uniformly narrower sequence.

\begin{remark}[Honest scope of the dual]
\label{rem:dual-scope}
The dual is exact and feasible for simple or parametric nulls -- exponential families, fully specified $P_{\theta_0}$, lattice or discrete e-processes -- where the three terms are functions of $\theta_0$ alone. It is not available, in general, for a fully nonparametric composite null. For the canonical bounded-mean betting confidence sequence \cite{waudbysmith2024estimating}, the null $H_0:\Ee[X]=m$ is met by every law on $[0,1]$ with mean $m$, and the overshoot law is not a function of $m$ alone: even with the same $m$ and the same bet, different in-null laws yield different exact miscoverage, so the worst case collapses back to the distribution-free threshold $1/\alpha$. The honest statement is therefore: an exact tightening wherever the null is a known or parametric law; an open problem, needing a shape or structure assumption, in the fully nonparametric case. The domain extension of \Cref{rem:domain-extension} is relevant here: even when the clean $1/\alpha$ bound requires the nonnegative structure, the accounting identity continues to localize where evidence is lost, which is what a diagnostic -- as opposed to a guarantee -- needs.
\end{remark}

\subsection{Where the identity meets the classical canon}

The overshoot term $O_b$ is the exact, non-asymptotic form of the ``excess over the boundary'' that Lorden bounds uniformly \cite{lorden1970excess} and that corrected-diffusion and nonlinear renewal theory handle asymptotically \cite{woodroofe1982nonlinear,siegmund1985sequential}. Fischer and Ramdas show that overshoot past the boundary is wasted and that removing it reduces the sample size of sequential probability ratio tests \cite{fischer2024improving}; the constant-overshoot families of \Cref{cor:constant-overshoot-families} supply that correction in closed form, and \Cref{thm:constant-overshoot} is what certifies it is exact rather than asymptotic. On the present side of the literature, the recalibration of \S\ref{sec:evalues} is the exact, instance-specific, and decomposed counterpart of the distribution-free threshold reductions of Blier-Wong and Wang \cite{blierwongwang2026thresholds}, now equipped with the confidence-sequence dual of \S\ref{sec:evalues}.

\section{A Tail-Rate Corollary}
\label{sec:tail-rate}

When the test does not reject almost surely, the conservation law of \Cref{sec:conservation} also controls how fast the not-yet-crossed mass decays. The relevant object here is not the nonnegative e-process $M_n$ of the identity but its centered companion, and we keep the two notations distinct to avoid a genuine ambiguity.

\paragraph{Two processes, one phenomenon.}
Throughout the paper $M_n$ denotes the nonnegative e-process, $M_0=1$, $M_n\ge0$. The tail-rate statement concerns instead a centered additive martingale $S_n$ -- think of $\log M_n$, or of the centered statistic underlying the test -- which takes negative values and is normalized so that $\Ee[S_n^2]=n$. The crossing time below is the first-passage time of $S_n$, and the moment inequalities are statements about $S_n$, not $M_n$.

\begin{corollary}[Decay of surviving mass]
\label{cor:tail-rate}
Let $S_n$ be a centered martingale with $\Ee[S_n^2]=n$, and let $T$ be a first-passage time with $L=\Ee[S_T\one(T<\infty)]$ finite as in \Cref{thm:conservation}. Then, from the moment inequalities of \cite[Cor.~3.3]{klass2026beyond}, for every $p>1$,
\begin{equation}
  |L|^{p/(p-1)}
  \le
  \liminf_{n\to\infty}\Pp(T>n)
  \left(\Ee[|S_n|^p\one(T>n)]\right)^{1/(p-1)}.
  \label{eq:tail-rate}
\end{equation}
The case $p=2$, after dropping the indicator inside the moment, gives the cruder $\liminf_n n\Pp(T>n)\ge L^2$.
\end{corollary}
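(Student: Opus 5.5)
The plan is to start from the carried-mass half of \Cref{thm:conservation}, applied to the mean-zero martingale $X_n=S_n$ and the first-passage time $T$. The hypothesis $\Ee|S_T|\one(T<\infty)<\infty$ is exactly what makes $L$ finite. The theorem then gives
\[
  \lim_{n\to\infty}\Ee[S_n\one(T>n)]=-L,
\]
so in particular $|L|=\lim_n\bigl|\Ee[S_n\one(T>n)]\bigr|$. Everything reduces to bounding the single quantity $\bigl|\Ee[S_n\one(T>n)]\bigr|$ for fixed $n$, and then taking $\liminf$.

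The key step is Hölder's inequality with exponents $p$ and $q=p/(p-1)$, applied to the product $S_n\one(T>n)\cdot\one(T>n)$:
\[
  \bigl|\Ee[S_n\one(T>n)]\bigr|
  \le \bigl(\Ee[|S_n|^p\one(T>n)]\bigr)^{1/p}\,\Pp(T>n)^{(p-1)/p}.
\]
Raising both sides to the power $p/(p-1)$ gives
\[
  \bigl|\Ee[S_n\one(T>n)]\bigr|^{p/(p-1)}
  \le \Pp(T>n)\bigl(\Ee[|S_n|^p\one(T>n)]\bigr)^{1/(p-1)}.
\]
This is, I expect, precisely the content of the cited moment inequality \cite[Cor.~3.3]{klass2026beyond}. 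Since $|x|^{p/(p-1)}$ is continuous, the left side converges to $|L|^{p/(p-1)}$. Taking $\liminf$ on the right then yields \eqref{eq:tail-rate}.

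For the $p=2$ specialization, the exponent $1/(p-1)$ equals $1$, and $\Ee[S_n^2\one(T>n)]\le\Ee[S_n^2]=n$. Hence $L^2\le\liminf_n\Pp(T>n)\,\Ee[S_n^2\one(T>n)]\le\liminf_n n\Pp(T>n)$, which is the stated cruder bound.

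The only delicate point is not the inequality but the bookkeeping. I will check three things: that the limit in \Cref{thm:conservation} genuinely exists, so that a $\liminf$ on the right suffices; that the moments $\Ee|S_n|^p$ may be infinite for $p>2$, in which case the bound holds trivially; and that a first-passage time of $S_n$ is a stopping time for the filtration of $S$. The case $L=0$ is trivial.
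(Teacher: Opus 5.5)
Your argument is correct and takes essentially the paper's route. The paper simply invokes the moment inequalities of \cite[Cor.~3.3]{klass2026beyond}; in substance these are your H\"older bound applied to the carried mass $\Ee[S_n\one(T>n)]\to -L$ from \Cref{thm:conservation}, and the paper itself later calls the $p=2$ gap ``the slack in the H\"older step''. Your $p=2$ specialization via $\Ee[S_n^2\one(T>n)]\le \Ee[S_n^2]=n$ matches the stated cruder bound exactly.
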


The sharp, indicator-carrying form \eqref{eq:tail-rate} is the one to use; the indicator-free corollary can be vacuous. For the simple symmetric random walk and $T=\inf\{n:S_n>0\}$ (\Cref{ex:first-positive}, so $S_T=1$ and $L=1$, with $\Pp(T<\infty)=1$),
\[
  \lim_{n\to\infty}\Pp(T>n)\Ee[S_n^2\one(T>n)]
  =\frac4\pi\approx1.2732,
\]
obtained from the two limits $\sqrt n\,\Pp(T>n)\to\sqrt{2/\pi}$ and $\Ee[S_n^2\one(T>n)]/\sqrt n\to2\sqrt{2/\pi}$, the latter via the optional-stopping identity $\Ee[S_n^2\one(T>n)]=\Ee[T\wedge n]-\Pp(T\le n)$ applied to the martingale $S_n^2-n$. The gap to the $p=2$ lower bound is exactly the slack in the H\"older step, $4/\pi-L^2=(4-\pi)/\pi$. For this same example the indicator-free corollary $\liminf_n n\Pp(T>n)\ge L^2=1$ says almost nothing, since in fact $n\Pp(T>n)\to\infty$; only the indicator-carrying limit produces the finite, memorable constant $4/\pi$. For sequential testing the corollary quantifies the rate at which the ``undecided'' probability $\Pp(T>n)$ shrinks -- the chance that the experiment has neither rejected nor exhausted its evidence by time $n$ -- complementing the static size statement \eqref{eq:exact-size}.

\section{Summary Table}
\label{sec:summary-table}

We collect the examples. Every row but the last is absorbing, so the first-pass identity \eqref{eq:absorbing-identity} applies; the last row needs the general identity \eqref{eq:general-identity}, with its entire correction living in $R_b$ ($D_b=0$, and no crossing to define an overshoot). The final two columns name, via \Cref{prop:three-causes}, which dials are on.

\begin{center}
\small
\begin{tabular}{@{}llllllll@{}}
\toprule
Ex. & Type & $b$ & Ville bd. & Exact & $O_b$ & $D_b+R_b$ & Active dial(s) \\
\midrule
4.4 & Double/absorb & $2$ & $\le 0.500$ & $0.500$ & $0$ & $0$ & none (tight) \\
4.4 & Double/absorb & $3$ & $\le 0.333$ & $0.250$ & $1$ & $0$ & overshoot \\
10.1 & Biased double & $2$ & $\le 0.500$ & $0.250$ & $0$ & $0.500$ & loss \\
10.1 & Biased double & $4$ & $\le 0.250$ & $0.0625$ & $0$ & $0.750$ & loss \\
10.2 & Mult. decay & $u^m$ & $\le u^{-m}$ & $p^m$ & $0$ & $1-(pu)^m$ & loss \\
10.4 & Gambler & $N/a$ & $\le a/N$ & $a/N$ & $0$ & $0$ & none (tight) \\
10.5 & Never crosses & $3$ & $\le 0.333$ & $0.000$ & -- & $1.000$ & residual \\
\bottomrule
\end{tabular}
\end{center}

\section{Conclusion}
\label{sec:conclusion}

We developed the exact Ville identity in two passes. The first, under the absorbing hypothesis that non-crossing paths vanish, gives the transparent formula $\Pp(T_b<\infty)=(1-D_b)/(b+O_b)$ and already exposes Ville's inequality as the result of discarding the loss $D_b$ and the overshoot $O_b$. The second pass removes the hypothesis, identifies the single term it had suppressed -- the survival residual $R_b$ -- and yields the general law $\Pp(T_b<\infty)=(1-D_b-R_b)/(b+O_b)$, of which the absorbing identity is exactly the case $R_b=0$.

The whole development rests on the conservation law for stopped martingales of the companion paper, which both legitimizes the limit defining $R_b$ and reveals it as the mass carried by paths still in motion. The same companion paper supplies two structural results we have put to work here. Its if-and-only-if finiteness characterization, valid for any process whose absolute value is a submartingale, pins down exactly when the identity's integrability hypothesis holds and shows that the accounting extends beyond the nonnegative corner -- the identity is more robust than the inequality it implies. Its wobbling-and-overshoot theorem, together with its counterexamples, organizes the three corrections by cause: the supermartingale gap governs $D_b$, bounded overshoot governs $O_b$, and wobbling governs $R_b$, so a loose Ville bound can be attributed to a specific failed condition rather than merely observed.

The continuous-time case, where overshoot and loss vanish and Ville's bound becomes an equality, sits at one corner of the identity; the random-walk renewal asymptotics, captured by the constant-overshoot families and the exact $4/\pi$ tail rate, sit at another. The constant-overshoot families deserve emphasis: in them $O_b$ is not a quantity to be bounded or approximated by renewal theory but an exact constant, which is the decisive reply to any view of overshoot as a negligible second-order effect. Reading the level as $b=1/\alpha$ turns the identity into the exact type-I error of a sequential e-value test, names its three sources of conservatism, and prescribes a sharper rejection threshold -- with a confidence-sequence dual -- that recovers the significance the test would otherwise waste, subject to the honest caveat that the recovery is priced against the null law.

The slack in Ville's inequality is thus not a mystery to be bounded but a quantity to be computed -- denominator inflation from overshoot, numerator leakage from loss and residual -- and, where it can be computed, it can be put to use.

\paragraph{Acknowledgements.}
Victor de la Pe\~na acknowledges financial support from Google DeepMind. We thank Fangyuan Lin for helpful comments.

\bibliographystyle{unsrtnat}
\bibliography{references}

@article{klass2026beyond,
  title={Beyond Wald's Equation and the Optional Sampling Theorem},
  author={Klass, Michael J and de la Pena, Victor H},
  journal={arXiv preprint arXiv:2601.17175},
  year={2026}
}

@book{ville1939etude,
  title={Etude critique de la notion de collectif},
  author={Ville, Jean},
  volume={3},
  year={1939},
  publisher={Gauthier-Villars Paris}
}

@book{doob1953stochastic,
  author = {Doob, Joseph L.},
  title = {Stochastic Processes},
  publisher = {Wiley},
  address = {New York},
  year = {1953}
}

@article{wald1944cumulative,
  author = {Wald, Abraham},
  title = {On Cumulative Sums of Random Variables},
  journal = {The Annals of Mathematical Statistics},
  volume = {15},
  number = {3},
  pages = {283--296},
  year = {1944},
  doi = {10.1214/aoms/1177731235}
}

@article{ramdas2023game,
  author = {Ramdas, Aaditya and Gr{\"u}nwald, Peter and Vovk, Vladimir and Shafer, Glenn},
  title = {Game-Theoretic Statistics and Safe Anytime-Valid Inference},
  journal = {Statistical Science},
  volume = {38},
  number = {4},
  pages = {576--601},
  year = {2023},
  doi = {10.1214/23-STS894},
  eprint = {2210.01948},
  archivePrefix = {arXiv}
}

@article{howard2020time,
  author = {Howard, Steven R. and Ramdas, Aaditya and McAuliffe, Jon and Sekhon, Jasjeet},
  title = {Time-Uniform {Chernoff} Bounds via Nonnegative Supermartingales},
  journal = {Probability Surveys},
  volume = {17},
  pages = {257--317},
  year = {2020},
  doi = {10.1214/18-PS321},
  eprint = {1808.03204},
  archivePrefix = {arXiv}
}

@article{howard2021confidence,
  author = {Howard, Steven R. and Ramdas, Aaditya and McAuliffe, Jon and Sekhon, Jasjeet},
  title = {Time-Uniform, Nonparametric, Nonasymptotic Confidence Sequences},
  journal = {The Annals of Statistics},
  volume = {49},
  number = {2},
  pages = {1055--1080},
  year = {2021},
  doi = {10.1214/20-AOS1991},
  eprint = {1810.08240},
  archivePrefix = {arXiv}
}

@article{grunwald2024safe,
  author = {Gr{\"u}nwald, Peter and de Heide, Rianne and Koolen, Wouter M.},
  title = {Safe Testing},
  journal = {Journal of the Royal Statistical Society: Series B (Statistical Methodology)},
  volume = {86},
  number = {5},
  pages = {1091--1128},
  year = {2024},
  doi = {10.1093/jrsssb/qkae011},
  eprint = {1906.07801},
  archivePrefix = {arXiv}
}

@article{shafer2021testing,
  author = {Shafer, Glenn},
  title = {Testing by Betting: A Strategy for Statistical and Scientific Communication},
  journal = {Journal of the Royal Statistical Society: Series A (Statistics in Society)},
  volume = {184},
  number = {2},
  pages = {407--431},
  year = {2021},
  doi = {10.1111/rssa.12647}
}

@article{vovk2021evalues,
  author = {Vovk, Vladimir and Wang, Ruodu},
  title = {E-Values: Calibration, Combination and Applications},
  journal = {The Annals of Statistics},
  volume = {49},
  number = {3},
  pages = {1736--1754},
  year = {2021},
  doi = {10.1214/20-AOS2020},
  eprint = {1912.06116},
  archivePrefix = {arXiv}
}

@article{koolen2026generalisation,
  author = {Koolen, Wouter M. and P{\'e}rez-Ortiz, Muriel F. and Lardy, Tyron},
  title = {A Generalisation of {Ville}'s Inequality to Monotonic Lower Bounds and Thresholds},
  journal = {Statistics \& Probability Letters},
  volume = {229},
  pages = {110577},
  year = {2026},
  doi = {10.1016/j.spl.2025.110577},
  eprint = {2502.16019},
  archivePrefix = {arXiv}
}

@misc{wangramdas2024extended,
  author = {Wang, Hongjian and Ramdas, Aaditya},
  title = {The Extended {Ville}'s Inequality for Nonintegrable Nonnegative Supermartingales},
  year = {2024},
  eprint = {2304.01163},
  archivePrefix = {arXiv},
  primaryClass = {math.PR},
  note = {To appear in Bernoulli}
}

@misc{ramdas2020admissible,
  author = {Ramdas, Aaditya and Ruf, Johannes and Larsson, Martin and Koolen, Wouter},
  title = {Admissible Anytime-Valid Sequential Inference Must Rely on Nonnegative Martingales},
  year = {2020},
  eprint = {2009.03167},
  archivePrefix = {arXiv},
  primaryClass = {math.ST}
}

@article{larsson2025numeraire,
  author = {Larsson, Martin and Ramdas, Aaditya and Ruf, Johannes},
  title = {The Numeraire E-Variable and Reverse Information Projection},
  journal = {The Annals of Statistics},
  volume = {53},
  number = {3},
  pages = {1015--1043},
  year = {2025},
  doi = {10.1214/24-AOS2487},
  eprint = {2402.18810},
  archivePrefix = {arXiv}
}

@article{ramdasmanole2026randomized,
  author = {Ramdas, Aaditya and Manole, Tudor},
  title = {Randomized and Exchangeable Improvements of {Markov}'s, {Chebyshev}'s and {Chernoff}'s Inequalities},
  journal = {Statistical Science},
  volume = {41},
  number = {1},
  pages = {121--142},
  year = {2026},
  doi = {10.1214/24-STS952},
  eprint = {2304.02611},
  archivePrefix = {arXiv}
}

@article{waudbysmith2024estimating,
  author = {Waudby-Smith, Ian and Ramdas, Aaditya},
  title = {Estimating Means of Bounded Random Variables by Betting},
  journal = {Journal of the Royal Statistical Society: Series B (Statistical Methodology)},
  volume = {86},
  number = {1},
  pages = {1--27},
  year = {2024},
  doi = {10.1093/jrsssb/qkad009},
  eprint = {2010.09686},
  archivePrefix = {arXiv}
}

@misc{agrawal2025stopping,
  author = {Agrawal, Shubhada and Ramdas, Aaditya},
  title = {On Stopping Times of Power-One Sequential Tests: Tight Lower and Upper Bounds},
  year = {2025},
  eprint = {2504.19952},
  archivePrefix = {arXiv},
  primaryClass = {math.ST}
}

@misc{blierwongwang2026thresholds,
  author = {Blier-Wong, Christopher and Wang, Ruodu},
  title = {Improved Thresholds for E-Values},
  year = {2026},
  eprint = {2408.11307},
  archivePrefix = {arXiv},
  primaryClass = {math.ST},
  note = {To appear in The Annals of Statistics}
}

@misc{kuang2026score,
  author = {Kuang, Qi and Gang, Bowen and Xia, Yin},
  title = {{SCORE}: A Unified Framework for Overshoot Refund in Online {FDR} Control},
  year = {2026},
  eprint = {2601.20386},
  archivePrefix = {arXiv},
  primaryClass = {stat.ME}
}

@article{koning2026anytime,
  author = {Koning, Nick W. and van Meer, Sam},
  title = {Anytime Validity Is Free: Inducing Sequential Tests},
  journal = {Journal of the Royal Statistical Society: Series B (Statistical Methodology)},
  year = {2026},
  doi = {10.1093/jrsssb/qkag050},
  eprint = {2501.03982},
  archivePrefix = {arXiv},
  note = {Advance online publication}
}

@article{fischer2024improving,
  author = {Fischer, Lasse and Ramdas, Aaditya},
  title = {Improving {Wald}'s (Approximate) Sequential Probability Ratio Test by Avoiding Overshoot},
  journal = {IEEE Transactions on Information Theory},
  year = {2026},
  doi = {10.1109/TIT.2026.3658855},
  eprint = {2410.16076},
  archivePrefix = {arXiv}
}

@article{lorden1970excess,
  author = {Lorden, Gary},
  title = {On Excess over the Boundary},
  journal = {The Annals of Mathematical Statistics},
  volume = {41},
  number = {2},
  pages = {520--527},
  year = {1970},
  doi = {10.1214/aoms/1177697092}
}

@book{woodroofe1982nonlinear,
  author = {Woodroofe, Michael},
  title = {Nonlinear Renewal Theory in Sequential Analysis},
  series = {CBMS-NSF Regional Conference Series in Applied Mathematics},
  publisher = {Society for Industrial and Applied Mathematics},
  address = {Philadelphia},
  year = {1982},
  doi = {10.1137/1.9781611970302}
}

@book{siegmund1985sequential,
  author = {Siegmund, David},
  title = {Sequential Analysis: Tests and Confidence Intervals},
  series = {Springer Series in Statistics},
  publisher = {Springer},
  address = {New York},
  year = {1985},
  doi = {10.1007/978-1-4757-1862-1}
}

@misc{nikeghbali2008generalization,
  author = {Nikeghbali, Ashkan},
  title = {A Generalization of {Doob}'s Maximal Identity},
  year = {2008},
  eprint = {0802.1317},
  archivePrefix = {arXiv},
  primaryClass = {math.PR}
}

\end{document}